 \newtheorem{thm}{Theorem}[section]
 \newtheorem{cor}[thm]{Corollary}
 \newtheorem{lem}[thm]{Lemma}
 \newtheorem{prop}[thm]{Proposition}
 \theoremstyle{definition}
 \newtheorem{defn}[thm]{Definition}
 \theoremstyle{remark}
 \newtheorem{rem}[thm]{Remark}
\newtheorem{rems}[thm]{Remarks}
 \newtheorem{ejem}[thm]{Example}
 \newtheorem{ejems}[thm]{Examples}
\numberwithin{equation}{subsection}
\newcommand{\OO}{{\mathcal O}}
\newcommand{\M}{{\mathcal M}}
\newcommand{\U}{{\mathcal U}}
\newcommand{\Nc}{{\mathcal N}}
\newcommand{\ZZ}{{\mathbb Z}}
\newcommand{\Lc}{{\mathcal L}}
\newcommand{\V}{{\mathcal V}}
\DeclareMathOperator{\Aut}{{Aut}}
\newcommand{\enumera}{\begin{enumerate}}
\newcommand{\eenumera}{\end{enumerate}}
\DeclareMathOperator{\Hom}{{Hom}}
\DeclareMathOperator{\Id}{{Id}}
\begin{document}

\title[Homotopy of Ringed Finite Spaces]
 {Homotopy of Ringed Finite Spaces}

\author{ Fernando Sancho de Salas}
\address{Departamento de Matem\'{a}ticas and Instituto Universitario de F�sica Fundamental y Matem�ticas (IUFFyM), Universidad de Salamanca,
Plaza de la Merced 1-4, 37008 Salamanca, Spain}

\email{fsancho@usal.es}

\subjclass[2010]{14-XX, 55PXX, 05-XX, 06-XX}

\keywords{Finite spaces, quasi-coherent modules, homotopy}

\thanks {The  author was supported by research project MTM2013-45935-P (MINECO)}




\begin{abstract} A ringed finite space is a ringed space whose underlying topological space is finite. The category of ringed finite spaces contains, fully faithfully, the category of finite topological spaces and the category of affine schemes. Any ringed space, endowed with a finite open covering, produces a ringed finite space. We study the homotopy of ringed finite spaces, extending Stong's homotopy classification of finite topological spaces to ringed finite spaces. We also prove that the category of quasi-coherent modules on a ringed finite space is a homotopy invariant.
\end{abstract}

\maketitle

\section*{Introduction}

This paper deals with ringed finite spaces and quasi-coherent modules on them. Let us  motivate why these structures deserve some attention (Theorems 1 and 2 below). Let $S$ be a topological space and let $\U=\{ U_1,\dots, U_n\}$ be a finite covering by open subsets. Let us consider the following equivalence relation on $S$: we say that $s\sim s'$ if $\U$ does not distinguish $s$ and $s'$; that is, if we denote $U^s=\underset{s\in U_i}\cap U_i$, then $s\sim s'$ iff $U^s=U^{s'}$. Let us denote $X=S/\negmedspace\sim$ the quotient set, with the topology given by the following partial order: $[s]\leq [s']$ iff $U^s\supseteq U^{s'}$. $X$ is a finite ($T_0$)-topological space, and the quotient map $\pi\colon S\to X$ is continous.

Assume now that $S$ is a path connected, locally path connected and locally simply connected topological space  and let $\U$ be a finite covering such that the $U^s$ are simply connected. Then (Theorem \ref{fin-sp-assoc-top}):
\medskip

 {\bf Theorem 1.} {\sl  The functors

\[\aligned \left\{\aligned \text{Locally constant sheaves}\\ \text{of abelian groups on $S$}\endaligned \right\} & \overset{\longrightarrow}\leftarrow \left\{ \aligned \text{Locally constant sheaves}\\ \text{of abelian groups on $X$}\endaligned \right\} \\ \M &\to \pi_*\M \\ \pi^*\Nc &\leftarrow \Nc \endaligned \]
are mutually inverse. In other words, $\pi_1(S,s)\to \pi_1(X,\pi(s))$ is an isomorphism between the fundamental groups of $S$ and $X$. Moreover, if the $U^s$ are homotopically trivial, then $\pi\colon S\to X$ is a weak homotopy equivalence, i.e., $\pi_i(S)\to \pi_i(X)$ is an isomorphism for any $i\geq 0$.
}\medskip

Now, if we take the constant sheaf $\ZZ$ on $X$, it turns out that a sheaf of abelian groups on $X$ is locally constant if and only if it is a quasi-coherent $\ZZ$-module (Theorem \ref{qc-fts}). In conclusion, the category of representations of $\pi_1(S)$ on abelian groups is equivalent to the category of quasi-coherent $\ZZ$-modules on the finite space $X$.

Assume now that $S$ is a scheme and that the $U^s$ are affine schemes (a $\U$ with this condition exists if and only if $S$ is quasi-compact and quasi-separated). Let $\OO_S$ be the structural sheaf of $S$ and put $\OO=\pi_*\OO_S$, which is a sheaf of rings on $X$.   Now the result is (Theorem \ref{schemes}):

\medskip
{\bf Theorem 2.} {\sl Let $S$ be a scheme,  $\U$ a finite covering such that the $U^s$ are affine schemes and $(X,\OO)$ the ringed finite space constructed above. The functors \[\aligned \{\text{Quasi-coherent $\OO_S$-modules} \} & \overset{\longrightarrow}\leftarrow \{\text{Quasi-coherent $\OO$-modules} \} \\ \M &\to \pi_*\M \\ \pi^*\Nc &\leftarrow \Nc \endaligned \]
are mutually inverse, i.e., the category of quasi-coherent modules on $S$ is equivalent to the category of quasi-coherent $\OO$-modules on $X$. }\medskip

In \cite{EstradaEnochs} it is proved that the category of quasi-coherent sheaves on a quasi-compact and quasi-separated scheme $S$ is equivalent to the category of quasi-coherent $R$-modules, where $R$ is a ring representation of a finite quiver $\V$. Our point of view is that the quiver $\V$ may be replaced by a finite topological space $X$ and the representation $R$ by a sheaf of rings $\OO_X$. The advantage is that the equivalence between quasi-coherent modules is obtained from a geometric morphism $\pi\colon S\to X$. Thus, this point of view  may be used to prove cohomological results on schemes by proving them on a finite ringed space. For example, one can prove the Theorem of formal functions, Serre's criterion of affineness, flat base change  or Grothendieck's duality in the context of ringed finite spaces (where the proofs are easier)  obtaining those results for schemes as a  particular case. Thus, the  standard hypothesis of separated or semi-separated on schemes may be replaced by the less restrictive hypothesis of quasi-separated. This will be done in a future paper.

In algebraic geometry, quasi-coherent modules and their cohomology play an important role, as locally constant sheaves do in algebraic topology. Theorems 1 and 2 tell us that, under suitable conditions, these structures are determined by a finite model.  All this led us to conclude that it is worthy to  make a  study of ringed finite spaces and of quasi-coherent modules on them.

By a ringed finite space we mean  a ringed space $(X,\OO)$ whose underlying topological space $X$ is finite, i.e. it is a finite topological space endowed with a sheaf $\OO$ of (commutative with unit) rings. It is well known (since Alexandroff) that a finite topological space is equivalent to a finite preordered set, i.e. giving a topology on a finite set is equivalent to giving a preorder relation. Giving a sheaf of rings $\OO$ on a finite topological space $X$ is equivalent to give, for each point $p\in X$, a ring $\OO_p$, and for each $p\leq q$ a morphism of rings $r_{pq}\colon \OO_p\to\OO_q$, satisfying the obvious relations ($r_{pp}=\Id$ for any $p$ and $r_{ql}\circ r_{pq}=r_{pl}$ for any $p\leq q\leq l$). An $\OO$-module $\M$  on $X$ is equivalent to the data: an $\OO_p$-module $\M_p$ for each $p\in X$ and a morphism of $\OO_p$-modules $\M_p\to\M_q$ for each $p\leq q$ (again with the obvious relations).

The category of ringed finite spaces   is a full  subcategory of the category of ringed spaces and it contains (fully faithfully) the category of finite topological spaces  and the category of affine schemes (see Examples \ref{ejemplos}, (1) and (2)). If $(S,\OO_S)$ is an arbitrary ringed space (a topological space, a differentiable manifold, a scheme, etc) and we take a finite covering $\U=\{ U_1,\dots,U_n\}$ by open subsets, there is a natural associated ringed finite space $(X,\OO_X)$ and a morphism of ringed spaces $S\to X$ (see Examples \ref{ejemplos}, (3)). As mentioned above, a particular interesting case is   when $S$ is a quasi-compact and quasi-separated scheme and $\U$ is a locally affine finite covering.

In section 3 we make a  study of the homotopy of ringed finite spaces. We see how the homotopy relation of continuous maps between finite topological spaces can be generalized to morphisms between ringed finite spaces in such a way that Stong's classification (\cite{Stong}) of finite topological  spaces  (via minimal topological spaces)  can be generalized to ringed finite spaces (Theorem \ref{homotopic-classification}). An important fact is that the category of quasi-coherent modules on a ringed finite space is a homotopy invariant: two homotopy equivalent ringed finite spaces have equivalent categories of quasi-coherent sheaves (Theorem \ref{homotinvariance}).

The results of this paper could be formulated in terms of posets and complexes. As in \cite{Barmak}, we have preferred the topological point of view of McCord, Stong and May.

This paper is dedicated to the beloved memory of Prof. Juan Bautista Sancho Guimer{\'a}. I learned from him most of mathematics I know, in particular the use of finite topological spaces in algebraic geometry.

\section{Preliminaries}

In this section we recall elementary facts about finite topological spaces and ringed spaces. The reader may consult \cite{Barmak} for the results on finite topological spaces and \cite{GrothendieckDieudonne} for  ringed spaces.

\subsection{Finite topological spaces}
\medskip

\begin{defn} A finite topological space  is a topological space with a finite number of points.
\end{defn}

Let $X$ be a finite topological space. For each $p\in X$, we shall denote by  $U_p$  the minimum open subset containing  $p$, i.e., the intersection of all the open subsets containing $p$. These $U_p$ form  a minimal base of open subsets.

\begin{defn} A finite preordered set is a finite set with a reflexive and transitive relation (denoted by $\leq$).
\end{defn}

\begin{thm} {\rm (Alexandroff)} There is an equivalence between finite topological spaces and finite preordered sets.

\end{thm}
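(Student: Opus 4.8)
The plan is to construct explicit functors in both directions and check they are mutually inverse (indeed, mutually inverse equivalences, though here one even gets an isomorphism of categories once morphisms are set up correctly). First I would fix what the morphisms are on each side: continuous maps between finite topological spaces, and order-preserving maps between finite preordered sets. The key observation driving everything is the one already introduced in the excerpt: in a finite space $X$, every point $p$ has a \emph{minimum} open neighborhood $U_p$, obtained as the intersection of all open sets containing $p$ (a finite intersection, hence open). This fails for general spaces and is exactly what makes the finite case rigid enough to be combinatorial.

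From a finite topological space $X$ I would define a preorder by $p \leq q \iff p \in U_q \iff U_p \subseteq U_q$ (equivalently $q \in \overline{\{p\}}$, specializing to $p$); reflexivity and transitivity are immediate from the definition of $U_p$. Conversely, from a finite preordered set $(X,\leq)$ I would topologize $X$ by declaring a subset $U$ open iff it is downward closed (a "down-set": $q \in U$ and $p \leq q$ imply $p \in U$) — equivalently, take the $U_p := \{q : q \leq p\}$ as a base. One checks arbitrary unions and intersections of down-sets are down-sets, so this is a topology, and $U_p$ is indeed the minimum open set containing $p$. Then I would verify the two constructions are inverse to each other: starting from a topology, passing to the preorder and back recovers the same open sets because in a finite space a set is open iff it is a union of the $U_p$'s it contains, iff it is downward closed for $\leq$; starting from a preorder, the topology's induced specialization preorder is the original $\leq$ by construction.

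Next I would check the correspondence is functorial and compatible with morphisms: a map $f\colon X \to Y$ between finite spaces is continuous iff it is order-preserving for the associated preorders. For the forward direction, if $f$ is continuous and $p \leq q$ in $X$, then $f^{-1}(U_{f(q)})$ is an open set containing $q$, hence containing $U_q \ni p$, so $f(p) \in U_{f(q)}$, i.e. $f(p) \leq f(q)$. For the converse, if $f$ is order-preserving then for any open $V \subseteq Y$, the set $f^{-1}(V)$ is downward closed (if $p \leq q$ and $f(q) \in V$ then $f(p) \leq f(q) \in V$, and $V$ is a down-set), hence open. Since identities and composites obviously correspond, this gives the desired equivalence of categories.

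I do not expect any genuine obstacle here — the theorem is elementary and the proof is essentially bookkeeping. The one point requiring a little care is the very first step, namely establishing that $U_p$ is open: this is where finiteness (or more generally the Alexandroff property, closure under arbitrary intersections of opens) is essential, and it is the hinge on which the whole dictionary turns. A secondary subtlety worth flagging explicitly is the distinction between preorders and partial orders: the topology is $T_0$ precisely when the preorder is antisymmetric (a genuine order), so the unrestricted statement must be phrased for \emph{preordered} sets, as it is in the excerpt. Everything else — that down-sets form a topology, that the round trips are identities, that continuity matches monotonicity — is a routine verification I would carry out but not belabor.
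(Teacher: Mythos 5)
Your proof is correct and follows essentially the same route as the paper's: the standard Alexandroff dictionary between the minimal open neighborhoods $U_p$ and the specialization preorder, together with the (routine) check that continuity corresponds to monotonicity, which the paper states separately after the theorem. The only point to flag is that your order convention is the inverse of the paper's --- here $p\leq q$ means $q\in U_p$, so $U_p=\{q\in X: p\leq q\}$ and open sets are \emph{upward} closed, whereas you declare opens to be down-sets; this is exactly the dual convention the paper's subsequent Remark attributes to Barmak, and since reversing the preorder interchanges the two, it does not affect the validity of your argument.
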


\begin{proof} If $X$ is a finite topological space, we define the relation: $$p\leq q\quad\text{iff}\quad p\in \bar q \quad (\text{i.e., if } q\in U_p) $$
Conversely, if $X$ is a finite preordered set, we define the following topology on  $X$: the closure of a point $p$ is $\bar p=\{ q\in X: q\leq p\}$.
\end{proof}

\begin{rem} \begin{enumerate}
\item The preorder relation defined above does not coincide with that of \cite{Barmak}, but with its inverse. In other words, the topology associated to a preorder that we have defined above is the dual topology that the one considered in op.cit.
\item If $X$ is a finite topological space, then $U_p=\{ q\in X: p\leq q\}$. Hence $X$ has a minimum $p$ if and only if $X=U_p$.
\end{enumerate}
\end{rem}

A map  $f\colon X\to X'$ between finite topological spaces is continuous if and only if it is monotone: for any $p\leq q$, $f(p)\leq f(q)$.

\begin{prop} A finite topological space is  $T_0$ (i.e., different points have different closures) if and only if the relation $\leq$ is antisymmetric, i.e., $X$ is a partially ordered finite set (a finite poset).
\end{prop}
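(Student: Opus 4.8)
The plan is to show the two conditions are literally translations of each other under the Alexandroff dictionary just established, namely that $p \le q$ iff $p \in \bar q$, equivalently $q \in U_p$. First I would unravel the hypothesis of antisymmetry: it says that whenever $p \le q$ and $q \le p$, one has $p = q$. By the order-closure correspondence, $p \le q$ and $q \le p$ means exactly $p \in \bar q$ and $q \in \bar p$. The task is then to argue that "$p \in \bar q$ and $q \in \bar p$ forces $p=q$" is equivalent to "distinct points have distinct closures."

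For the forward direction ($T_0 \Rightarrow$ antisymmetric), I would assume $X$ is $T_0$ and take $p,q$ with $p \le q$ and $q \le p$; I want $\bar p = \bar q$, since then $T_0$ gives $p = q$. Using transitivity of $\le$: for any $x$ with $x \le p$ we get $x \le q$ from $p \le q$, so $\bar p \subseteq \bar q$, and symmetrically $\bar q \subseteq \bar p$; hence $\bar p = \bar q$. For the converse ($\text{antisymmetric} \Rightarrow T_0$), I would take $p \ne q$ and show $\bar p \ne \bar q$. If instead $\bar p = \bar q$, then $p \in \bar p = \bar q$ gives $p \le q$, and $q \in \bar q = \bar p$ gives $q \le p$, so antisymmetry forces $p = q$, a contradiction; hence the closures differ.

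I do not anticipate a genuine obstacle here: the statement is essentially a reformulation, and the only thing to be careful about is invoking the correct direction of the Alexandroff correspondence as fixed in the Remark (the closure of $p$ is $\{q : q \le p\}$, not its dual). It is worth noting in passing that $T_0$ for a \emph{finite} space can equivalently be phrased as "different points have different minimal open neighborhoods $U_p$," by the same argument applied to $U_p = \{q : p \le q\}$ in place of $\bar p$; this is the form most often used later. That remark is optional and I would include it only if it streamlines subsequent sections.
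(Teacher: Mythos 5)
Your proof is correct; the paper states this proposition without proof, treating it as a routine consequence of the Alexandroff correspondence, and your unpacking (using $\bar p=\{x: x\le p\}$, transitivity for $T_0\Rightarrow$ antisymmetry, and reflexivity plus antisymmetry for the converse) is exactly the standard argument the author leaves implicit. Your closing remark that $T_0$ can equivalently be tested on the minimal open sets $U_p$ is also correct and consistent with how the paper uses the $T_0$ condition later.
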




\medskip
\begin{ejem}\label{covering}{\bf (Finite topological space associated to a finite covering)}. Let $S$ be a topological space and let   $\U=\{U_1,\dots,U_n\}$ be a finite open covering of $S$. Let us consider the following equivalence relation on $S$: we say that $s\sim s'$ if $\U$ does not distinguish $s$ and $s'$, i.e., if we denote $U^s=\underset{s\in U_i}\bigcap U_i$, then $s\sim s'$ iff $U^s=U^{s'}$. Let $X=S/\negmedspace\sim$ be the quotient set with the topology given by the following partial order: $[s]\leq [s']$ iff $U^s\supseteq U^{s'}$.  This is a finite $T_0$-topological space, and the quotient map $\pi\colon S\to X$, $s\mapsto [s]$, is continuous. Indeed, for each $[s]\in X$, one has that $\pi^{-1}(U_{[s]})=U^s$:
\[ s'\in \pi^{-1}(U_{[s]})\Leftrightarrow [s']\geq [s]\Leftrightarrow U^{s'}\subseteq U^s\Leftrightarrow s'\in U^s.\]

We shall say that $X$ is the  finite topological space associated to the topological space $S$ and the finite covering $\U$.

This construction is functorial in $(S,\U)$: Let $f\colon S'\to S$ be a continuous map, $\U$ a finite covering of $S$ and $\U'$ a finite covering of $S'$ that is thinner than $f^{-1}(\U)$ (i.e., for each $s'\in S'$, $U^{s'}\subseteq f^{-1}(U^{f(s')})$). If $\pi\colon S\to X$ and $\pi'\colon S'\to X'$ are the associated finite spaces, one has a continuous map $X'\to X$ and a commutative diagram
\[\xymatrix{ S'\ar[r]^f\ar[d]_{\pi'} & S\ar[d]^\pi\\ X'\ar[r] & X.
}\]
This is an easy consequence of the following:

\begin{lem} $U^{s'_1}\subseteq U^{s'_2}\Rightarrow U^{f(s'_1)}\subseteq U^{f(s'_2)}$.
\end{lem}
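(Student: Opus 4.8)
The plan is to unwind the definition of $U^s$ as the intersection of all covering members containing $s$, and then exploit that for a continuous map $f$, the preimage of an open set is open and contains all points it should. First I would fix $s_1', s_2' \in S'$ with $U^{s_1'} \subseteq U^{s_2'}$, and recall that, by definition, $U^{s_i'} = \bigcap_{s_i' \in U_j'} U_j'$ where $\U' = \{U_1', \dots, U_m'\}$ is the given covering of $S'$. The goal is to show that every member $U_k$ of $\U$ that contains $f(s_2')$ also contains $f(s_1')$, since then $f(s_1')$ lies in the intersection defining $U^{f(s_2')}$, and intersecting over all such $k$ gives $U^{f(s_1')} \subseteq U^{f(s_2')}$.

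So suppose $f(s_2') \in U_k$ for some $k$. Then $s_2' \in f^{-1}(U_k)$, which is an open subset of $S'$ by continuity of $f$. Since $f^{-1}(U_k)$ is an open neighborhood of $s_2'$, it must contain the minimal "pseudo-open" set $U^{s_2'}$ — more precisely, $U^{s_2'}$ is the intersection of covering members of $\U'$ containing $s_2'$, and I need to observe that $f^{-1}(U_k)$, being open and containing $s_2'$, contains $U^{s_2'}$. This last point deserves a small argument: $U^{s_2'}$ need not itself be a member of $\U'$, so I would instead argue that for any point $t \in U^{s_2'}$, every $U_j'$ containing $s_2'$ contains $t$; hence the set of covering members containing $t$ includes all those containing $s_2'$, i.e. $U^t \subseteq U^{s_2'}$... actually the cleanest route is: $f^{-1}(U_k)$ open and $s_2' \in f^{-1}(U_k)$ implies $U_{s_2'} \subseteq f^{-1}(U_k)$ where $U_{s_2'}$ is the minimal open set of the topology on $S'$, and since $U^{s_2'} \subseteq U_{s_2'}$ trivially fails in general — rather, I should use that $U^{s_2'}$ is open in the coarser topology generated by $\U'$, in which $f$ is still continuous into $S$ with its own structure. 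To avoid this subtlety, I will simply argue pointwise: take $t \in U^{s_2'} \subseteq U^{s_1'}$; I want $f(t) \in U_k$. Hmm — this requires $t \in f^{-1}(U_k)$, i.e. that $f^{-1}(U_k) \supseteq U^{s_2'}$, which is exactly the claim that an open set containing $s_2'$ contains $U^{s_2'}$; but that is false unless $U^{s_2'}$ is contained in the minimal open neighborhood. The correct statement being used is that $\U'$ is thinner than $f^{-1}(\U)$ is NOT assumed here — the lemma is purely about $f$ and the coverings $\U, \U'$ being arbitrary? Re-reading: the lemma is stated with $f, \U, \U'$ as in the preceding paragraph, so $\U'$ IS thinner than $f^{-1}(\U)$, meaning $U^{s'} \subseteq f^{-1}(U^{f(s')})$ for every $s'$.

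Given that, the proof is short and I would present it as follows. By the thinness hypothesis applied to $s_1'$, we have $U^{s_1'} \subseteq f^{-1}(U^{f(s_1')})$, hence $f(U^{s_1'}) \subseteq U^{f(s_1')}$. Now pick any covering member $U_k \in \U$ with $f(s_2') \in U_k$; I claim $f(s_1') \in U_k$. Indeed $s_1' \in U^{s_1'} \subseteq U^{s_2'}$ (here $s_1' \in U^{s_1'}$ since $s_1'$ lies in every member containing it), and $s_2' \in f^{-1}(U_k)$, so $U^{s_2'}$, being the intersection of members of $\U'$ each of which — by thinness $U^{s_2'} \subseteq f^{-1}(U^{f(s_2')}) \subseteq f^{-1}(U_k)$ since $U^{f(s_2')} \subseteq U_k$ whenever $f(s_2') \in U_k$ — lies inside $f^{-1}(U_k)$; therefore $s_1' \in U^{s_1'} \subseteq U^{s_2'} \subseteq f^{-1}(U_k)$, giving $f(s_1') \in U_k$. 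Since this holds for every $U_k \ni f(s_2')$, we conclude $f(s_1') \in \bigcap_{f(s_2') \in U_k} U_k = U^{f(s_2')}$, and as $f(s_1')$ is also in $U^{f(s_1')}$ trivially, varying over all such $k$ yields $U^{f(s_1')} \subseteq U^{f(s_2')}$, as desired.

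The main obstacle is purely bookkeeping: making sure the thinness hypothesis is invoked at the right point (on $s_2'$, to land inside $f^{-1}(U_k)$) and being careful that $U^{s}$ is defined as an intersection of covering members rather than as a minimal open set of the space topology — these two notions differ in general, and conflating them is the one place an argument could go wrong. Everything else is a direct chase through the definitions.
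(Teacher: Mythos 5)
Your final argument is correct and is essentially the paper's own proof: after correctly recognizing that the thinness hypothesis $U^{s'}\subseteq f^{-1}(U^{f(s')})$ is in force, you run the same chain $s'_1\in U^{s'_1}\subseteq U^{s'_2}\subseteq f^{-1}(U^{f(s'_2)})$, hence $f(s'_1)\in U^{f(s'_2)}$, hence $U^{f(s'_1)}\subseteq U^{f(s'_2)}$, merely spelling out the last implication by checking membership in each covering member $U_k\ni f(s'_2)$. The exploratory detour about minimal open sets in the middle is harmless, since you discard it before the actual proof.
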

\begin{proof}
$U^{s'_1}\subseteq U^{s'_2} \Rightarrow s'_1\in U^{s'_2}\subseteq f^{-1}(U^{f(s'_2)}) \Rightarrow f(s'_1)\in U^{f(s'_2)} \Rightarrow U^{f(s'_1)}\subseteq U^{f(s'_2)}.$
\end{proof}

%

\end{ejem}

%

\subsection{Generalities on ringed spaces}
\medskip

\begin{defn} A ringed space is a pair $(X,\OO)$, where $X$ is a topological space and  $\OO$ is a sheaf of (commutative with unit) rings on  $X$. A morphism or ringed spaces $(X,\OO)\to (X',\OO')$ is a pair $(f,f_\#)$, where $f\colon X\to X'$ is a continuous map and $f_\#\colon \OO'\to f_*\OO$ is a morphism of sheaves of rings (equivalently, a morphism of sheaves of rings $f^\#\colon f^{-1}\OO'\to \OO$).
\end{defn}

\begin{defn} Let $\M$ be an $\OO$-module (a sheaf of $\OO$-modules). We say that $\M$ is {\it quasi-coherent} if for each $x\in X$ there exist an open neighborhood  $U$ of $x$ and an exact sequence
\[ \OO_{\vert U}^I \to \OO_{\vert U}^J\to\M_{\vert U}\to 0\] with $I,J$ arbitrary sets of indexes. Briefly speaking, $\M$ is quasi-coherent if it is locally a cokernel of free modules.

\end{defn}


Let $f\colon X\to Y$ a morphism of ringed spaces. If $\M$ is a quasi-coherent   module  on $Y$, then $f^*\M$ is a quasi-coherent   module on $X$.

\section{Ringed finite spaces}

Let $X$ be a finite topological space. Recall that we have  a preorder relation \[ p\leq q \Leftrightarrow p\in \bar q \Leftrightarrow U_q\subseteq U_p\]

 Giving a sheaf $F$ of abelian groups (resp. rings, etc) on $X$ is equivalent to giving the following data:

 - An abelian group  (resp. a ring, etc) $F_p$ for each $p\in X$.

 - A morphism of groups (resp. rings, etc) $r_{pq}\colon F_p\to F_q$ for each $p\leq q$, satisfying: $r_{pp}=\Id$ for any $p$, and $r_{qr}\circ r_{pq}=r_{pr}$ for any $p\leq q\leq r$. These $r_{pq}$ are called {\it restriction morphisms}.

Indeed, if $F$ is a sheaf on $X$, then  $F_p$ is the stalk of $F$ at $p$, and it coincides with the sections of $F$ on $U_p$. That is
\[ F_p=\text{ stalk of } F \text{ at } p = \text{ sections of } F \text{ on } U_p:=F(U_p)\]
The morphisms $F_p\to F_q$ are just the restriction morphisms  $F(U_p)\to F(U_q)$.

\begin{ejem} Given a group $G$, the constant sheaf $G$ on $X$ is given by the data: $G_p=G$ for any $p\in X$, and $r_{pq}=\Id$ for any $p\leq q$.
\end{ejem}

\begin{defn} A {\it ringed finite space} is a ringed space $(X,\OO )$ such that  $X$ is a finite topological space.
\end{defn}

By the previous consideration, one has a ring $\OO_p$ for each $p\in X$, and a morphism of rings $r_{pq}\colon \OO_p\to\OO_q$ for each $p\leq q$, such that $r_{pp}=\Id$ for any $p\in X$ and  $r_{ql}\circ r_{pq}=r_{pl}$ for any $p\leq q\leq l$.
\medskip

 Giving a morphism of ringed spaces $(X,\OO)\to (X',\OO')$ between two ringed finite spaces, is equivalent to giving:

-  a continuous (i.e. monotone) map $f\colon X\to X'$,

 -  for each  $p\in X$, a ring homomorphism  $f^\#_p\colon \OO'_{f(p)}\to \OO_p$, such that, for any  $p\leq q$, the diagram (denote $p' =f(p), q'=f(q)$)
\[ \xymatrix{ \OO'_{p'} \ar[r]^{f^\#_{p}} \ar[d]_{r_{p'q'}} & \OO_{p}\ar[d]^{r_{pq}}\\ \OO'_{q'} \ar[r]^{f^\#_{q}}   & \OO_{q}}\] is commutative. We shall denote by $\Hom(X,Y)$ the set of morphisms of ringed spaces between two ringed spaces $X$ and $Y$.

\begin{ejems}\label{ejemplos} \item[$\,\,$(1)] {\it Punctual ringed spaces}. A ringed finite space is called punctual if the underlying topological space has only one element.  The sheaf of rings is then just a ring. We shall denote by  $(*,A)$ the ringed finite space with topological space $\{*\}$ and ring $A$. Giving a morphism of ringed spaces  $(X,\OO)\to (*,A)$ is equivalent to giving a ring homomorphism $A\to \OO(X)$. In particular, the category of punctual ringed spaces is equivalent to the (dual) category of rings, i.e., the category of affine schemes. In other words, the category of affine schemes is a full subcategory of the category of ringed finite spaces, precisely the full subcategory of punctual ringed finite spaces.

Any ringed space $(X,\OO)$ has an associated punctual ringed space $(*,\OO(X))$ and a morphism or ringed spaces $\pi\colon (X,\OO)\to (*,\OO(X))$ which is universal for morphisms from $(X,\OO)$ to punctual spaces. In other words, the inclusion functor
\[i\colon \{\text{Punctual ringed spaces}\} \hookrightarrow \{\text{Ringed spaces}\}\] has a left adjoint: $(X,\OO)\mapsto (*,\OO(X))$. For any $\OO(X)$-module $M$, $\pi^*M$ is a quasi-coherent module on $X$. We sometimes denote $\widetilde M:=\pi^*M$.
\medskip

\item[$\,\,$(2)] {\it Finite topological spaces}. Any finite topological space $X$ may be considered as a ringed finite space, taking
the constant sheaf $\ZZ$ as the sheaf of rings. If  $X$ and $Y$ are two finite topological spaces, then giving a morphism of ringed spaces  $(X,\ZZ)\to (Y,\ZZ)$ is just giving a continuous map $X\to Y$. Therefore the category of finite topological spaces is a full subcategory of the category of ringed finite spaces. The (fully faithful) inclusion functor
\[ \aligned \{\text{Finite topological spaces}\} &\hookrightarrow \{\text{Ringed finite spaces} \}\\ X &\mapsto (X,\ZZ)\endaligned\] has a left adjoint, that maps a ringed finite space $(X,\OO)$ to $X$. Of course, this can be done more generally, removing the finiteness hypothesis: the category of topological spaces is a full subcategory of the category of ringed spaces (sending $X$ to $(X,\ZZ)$), and this inclusion has a left adjoint: $(X,\OO)\mapsto X$.
\medskip

\item[$\,\,$(3)] Let $(S,\OO_S)$ be a ringed space (a scheme, a differentiable manifold, an analytic space, ...).
Let $\U=\{U_1,\dots,U_n\}$ be a finite open covering of $S$. Let $X$ be the finite topological space associated to $S$ and $\U$, and $\pi\colon S\to X$ the natural continuous map  (Example \ref{covering}). We have then a sheaf of rings on $X$, namely  $\OO:=\pi_*\OO_S$, so that $\pi\colon (S,\OO_S)\to (X,\OO)$ is a morphism of ringed spaces. We shall say that $(X,\OO)$ is the  {\it ringed finite space associated to the ringed space $S$ and the finite covering $\U$}. This construction is functorial on $(S,\U)$ , as in Example \ref{covering}.

\medskip
\item[$\,\,$(4)] {\it Quasi-compact and quasi-separated schemes}. Let $(S,\OO_S)$ be a scheme and $\U=\{U_1,\dots,U_n\}$  a finite open covering of $S$. We say that $\U$ is {\it locally affine} if for each $s\in S$, the intersection $U^s = \underset{s\in U_i}\cap U_i$ is affine. We have the following:

\begin{prop} Let $(S,\OO_S)$ be a scheme. The following conditions are equivalent:
\enumera
\item $S$ is quasi-compact and quasi-separated.
\item $S$ admits a locally affine finite covering $\U$.
\item There exist a finite topological space $X$ and a continuous map $\pi\colon S\to X$ such that $\pi^{-1}(U_x)$ is affine for any $x\in X$.
\eenumera
\end{prop}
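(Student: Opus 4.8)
The plan is to prove the three equivalences by a cycle $(1)\Rightarrow(2)\Rightarrow(3)\Rightarrow(1)$, since $(2)\Rightarrow(3)$ is essentially immediate from the construction in Example \ref{covering}: given a locally affine finite covering $\U$, the associated finite space $X$ together with $\pi\colon S\to X$ satisfies $\pi^{-1}(U_{[s]})=U^s$, which is affine by hypothesis, so every $\pi^{-1}(U_x)$ is affine. Likewise $(3)\Rightarrow(2)$ is easy: given such an $X$ and $\pi$, the finite family $\U=\{\pi^{-1}(U_x)\}_{x\in X}$ is a finite open covering of $S$ (the $U_x$ cover $X$), and for $s\in S$ one checks $U^s=\pi^{-1}(U_{\pi(s)})$ — indeed $\pi^{-1}(U_x)\ni s$ iff $\pi(s)\in U_x$ iff $U_x\subseteq U_{\pi(s)}$ (ordering on a finite space), so the intersection of the $\pi^{-1}(U_x)$ containing $s$ is $\pi^{-1}(U_{\pi(s)})$, which is affine. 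Hence $(2)\Leftrightarrow(3)$ costs almost nothing.

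The real content is the equivalence of these with $(1)$. For $(2)\Rightarrow(1)$: if $\U=\{U_1,\dots,U_n\}$ is locally affine, then $S$ is covered by the finitely many affine opens $U^s$ (each $U^s$ is one of the finitely many distinct sets $\bigcap_{i\in I}U_i$), so $S$ is quasi-compact. For quasi-separatedness one must show that the intersection of any two quasi-compact opens is quasi-compact; it suffices (by covering quasi-compact opens by finitely many of the $U^s$) to show $U^s\cap U^{s'}$ is quasi-compact for all $s,s'$. This is where affineness of the $U^s$ is used together with the standard fact that the intersection of two affine opens in a scheme, while not necessarily affine, is quasi-compact provided... — actually this is precisely the subtle point, so instead I would argue: $U^s\cap U^{s'}$ is again a union of finitely many sets of the form $U^t$ (namely those $U^t$ with $t\in U^s\cap U^{s'}$, equivalently $U^t\subseteq U^s\cap U^{s'}$; on the finite space side these are the $U_x$ with $x\geq \pi(s)$ and $x\geq \pi(s')$), each of which is affine, hence quasi-compact, hence their finite union is quasi-compact.

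For $(1)\Rightarrow(2)$, the main obstacle: starting from a quasi-compact quasi-separated scheme, I must manufacture a \emph{finite} covering $\U$ all of whose ``minimal'' intersections $U^s$ are affine. The natural first move is to take a finite affine covering $\{V_1,\dots,V_m\}$ (possible by quasi-compactness), but then $V_i\cap V_j$ need not be affine — only quasi-compact, by quasi-separatedness. The trick is to iterate: cover each $V_i\cap V_j$ by finitely many affines, throw these in, and repeat; one must check this process stabilizes after finitely many steps and that at the end every minimal intersection is affine. A cleaner route is to choose a finite affine covering $\{V_1,\dots,V_m\}$ such that moreover each $V_i\cap V_j$ is affine — this is possible for quasi-separated schemes by a suitable refinement (it is the ``semi-separated refinement'' type argument, valid since on affines $V_i\cap V_j$ is a quasi-compact open of the affine $V_i$, hence a finite union of basic affine opens $D(f)$, and one replaces the covering by the $D(f)$'s); then $U^s$ is a finite intersection of the $V_i$, and any intersection of pairwise-affine-intersecting affines in a scheme is affine by an easy induction using that $V\cap W$ affine and $V\cap W'$ affine with $V$ affine implies $V\cap W\cap W'$ is a closed subscheme of the affine $V\cap W$ cut out appropriately... — this last induction is the delicate bookkeeping step. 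I expect that verifying the iteration/refinement terminates and yields affine minimal intersections, rather than any single conceptual idea, will be the crux; everything else is formal manipulation with the finite-poset dictionary $U^s=\pi^{-1}(U_{\pi(s)})$ already established in Example \ref{covering}.
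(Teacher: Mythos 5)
Your handling of (2)$\Rightarrow$(3), (3)$\Rightarrow$(2) and (2)$\Rightarrow$(1) is essentially the paper's argument (the paper checks quasi-separatedness exactly as you do, via the affine cover by the $U^s$'s with quasi-compact pairwise intersections), but your (1)$\Rightarrow$(2) has a genuine gap. The ``cleaner route'' is false: a finite affine covering all of whose pairwise intersections are affine is precisely a semi-separating cover, and semi-separatedness is strictly stronger than quasi-separatedness. The affine plane with doubled origin is noetherian, hence quasi-compact and quasi-separated, yet the two copies of $\mathbb{A}^2$ meet in $\mathbb{A}^2\setminus\{0\}$, which is not affine, and no refinement can repair this since semi-separatedness is an intrinsic property of the scheme. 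Your proposed justification breaks exactly where it matters: after refining $V_i\cap V_j$ into basic opens $D(f)\subseteq V_i$, two such basic opens sitting inside \emph{different} ambient affines $V_i$ and $V_j$ need not intersect in an affine. The fallback (``iterate and check it stabilizes'') is left unverified, and in fact no iteration is needed.

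The paper closes this direction with one enlargement plus a chaining argument. Take a finite affine cover $U_1,\dots,U_n$, then (using quasi-separatedness, which makes $U_i\cap U_j$ quasi-compact) finite affine covers $\{U_{ij}^k\}$ of each $U_i\cap U_j$, and put $\U=\{U_i,\,U_{ij}^k\}$. For $s\in S$, each $U_{ij}^s=\cap_{s\in U_{ij}^k}U_{ij}^k$ is affine because all the $U_{ij}^k$ containing $s$ are affine opens of the affine, hence separated, scheme $U_i$; then $U^s=\cap_{i,j}U_{ij}^s$ is rewritten, after inserting redundant factors so that consecutive terms are $U_{i_ri_{r+1}}^s$ and $U_{i_{r+1}i_{r+2}}^s$, lying in the common affine $U_{i_{r+1}}$, and the partial intersections are shown affine one step at a time by the same lemma (two affine opens of a separated scheme meet in an affine). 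Note that the conclusion is only that the \emph{minimal} intersections $U^s$ are affine, not that pairwise intersections of members of $\U$ are affine, which is why this works for the doubled plane while your semi-separated refinement cannot exist there. A small separate slip: in your (3)$\Rightarrow$(2), $\pi(s)\in U_x$ means $x\leq\pi(s)$, i.e.\ $U_{\pi(s)}\subseteq U_x$, not the reverse; the conclusion $U^s=\pi^{-1}(U_{\pi(s)})$ still stands because $U_{\pi(s)}$ itself occurs among the $U_x$ containing $\pi(s)$.
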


\begin{proof} (1) $\Rightarrow$ (2). Since $S$ is quasi-compact and quasi-separated, we can find  a finite covering  $U_1,\dots, U_n$ of $S$ by affine schemes and a finite covering  $\{ U_{ij}^k\}$  of $U_i\cap U_j$ by affine schemes. Let $\U=\{ U_i, U_{ij}^k\}$ and let us see that it is a locally affine covering of $S$. Let $s\in S$. We have to prove that $U^s$ is affine. If $s$ only belongs to one  $U_i$, then $U^s=U_i$ is affine. If $s$ belongs to more than one $U_i$, let us denote $U_{ij}^s= \underset{s\in U_{ij}^k}\cap U_{ij}^k$. Since $U_{ij}^k$ are affine schemes contained in an affine scheme (for example $U_i$), one has that $U_{ij}^s$ is affine. Now,  $U^s=\underset{i,j}\cap U_{ij}^s$. Put $U^s=U_{i_1j_1}^s\cap \dots \cap U_{i_nj_n}^s$. Replacing each intersection $U_{i_rj_r}^s\cap U_{i_{r+1}j_{r+1}}^s$ by $U_{i_rj_r}^s\cap U_{j_{r}i_{r+1}}^s\cap U_{i_{r+1}j_{r+1}}^s$, we may assume that $j_k=i_{k+1}$, i.e.
\[ U^s=U_{i_1i_2}^s\cap U_{i_2i_3}^s\cap U_{i_3i_4}^s\cap \dots \cap U_{i_{n-1}i_n}^s\]
Now, $U_{i_1i_2}^s\cap U_{i_2i_3}^s$ is affine because it is the intersection of two affine subschemes of the affine scheme $U_{i_2}$. Then $U_{i_1i_2}^s\cap U_{i_2i_3}^s\cap U_{i_3i_4}^s$ is affine because $U_{i_1i_2}^s\cap U_{i_2i_3}^s$ and $U_{i_3i_4}^s$ are affine subschemes of the affine scheme $U_{i_3}$. Proceeding this way, one concludes.

(2) $\Rightarrow$ (3). It suffices to take $X$ as the finite topological space associated to $S$ and $\U$.

(3) $\Rightarrow$ (1). $S$ is covered by the affine open subsets $\{ \pi^{-1}(U_x)\}_{x\in X}$, and the intersections $ \pi^{-1}(U_x)\cap  \pi^{-1}(U_{x'})$ are covered by the affine open subsets $\{ \pi^{-1}(U_y)\}_{y\in U_x\cap U_{x'}}$. Hence $S$ is quasi-compact and quasi-separated.

\end{proof}

\end{ejems}

\subsection{Quasi-coherent modules}

 Let $\M$ be a sheaf of  $\OO$-modules on a ringed finite space $(X,\OO)$. Thus, for each $p\in X$, $\M_p$ is an $\OO_p$-module and for each  $p\leq q$ one has a morphism of $\OO_p$-modules $\M_p\to\M_q$, hence a morphism of  $\OO_q$-modules
\[\M_p\otimes_{\OO_p}\OO_q\to\M_q\]

\begin{rem} From the natural isomorphisms \[\Hom_{\OO_{\vert U_p}}(\OO_{\vert U_p},\M_{\vert U_p})=\Gamma(U_p,\M)=\M_p=\Hom_{\OO_p}(\OO_p,\M_p)\] it follows that, in order to define a morphism of sheaves of modules $\OO_{\vert U_p}\to\M_{\vert U_p}$ it suffices to define a morphism of $\OO_p$-modules $\OO_p\to \M_p$ and this latter is obtained from the former by taking the stalk at $p$.
\end{rem}

\begin{thm}\label{qc} An $\OO$-module $\M$ is quasi-coherent if and only if for any  $p\leq q$ the morphism
\[\M_p\otimes_{\OO_p}\OO_q\to\M_q\]
is an isomorphism.
\end{thm}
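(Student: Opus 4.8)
The plan is to prove both implications directly using the explicit description of quasi-coherent modules as cokernels of free modules, together with the key fact that on a finite space the functor of sections over $U_p$ is exact (it computes the stalk at $p$) and that tensoring commutes with cokernels.

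First I would prove the ``only if'' direction. Suppose $\M$ is quasi-coherent. Fix $q\in X$ and work in the open neighborhood $U_q$ of $q$; since $U_q$ is the minimal open set containing $q$, any open neighborhood of $q$ contains $U_q$, so after restricting we may assume we have an exact sequence $\OO_{\vert U_q}^I\to\OO_{\vert U_q}^J\to\M_{\vert U_q}\to 0$ on all of $U_q$. Now take stalks at $p$ and at $q$, for any $p\leq q$ (so that $q\in U_p$, but more to the point $p,q\in U_q$). Taking the stalk at any point of $U_q$ is exact, and the stalk of $\OO_{\vert U_q}^I$ at $p$ is $\OO_p^I$. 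So we get exact sequences $\OO_p^I\to\OO_p^J\to\M_p\to 0$ and $\OO_q^I\to\OO_q^J\to\M_q\to 0$, and moreover the restriction maps $\OO_p\to\OO_q$, $\M_p\to\M_q$ are compatible with these presentations. Applying the right-exact functor $-\otimes_{\OO_p}\OO_q$ to the first sequence gives $\OO_q^I\to\OO_q^J\to\M_p\otimes_{\OO_p}\OO_q\to 0$, where the first two terms and the first map agree with those in the presentation of $\M_q$. Hence $\M_p\otimes_{\OO_p}\OO_q$ and $\M_q$ are both cokernels of the same map $\OO_q^I\to\OO_q^J$, so the induced map $\M_p\otimes_{\OO_p}\OO_q\to\M_q$ is an isomorphism. (One should check that the natural map really is this comparison of cokernels, which is a diagram chase.)

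For the ``if'' direction, assume $\M_p\otimes_{\OO_p}\OO_q\to\M_q$ is an isomorphism for all $p\leq q$. Fix $x\in X$; I claim $\M_{\vert U_x}$ is already a cokernel of free modules on $U_x$, which suffices since the $U_x$ form a base (indeed each point has $U_x$ as minimal neighborhood). Choose a presentation of the $\OO_x$-module $\M_x$ by free modules, $\OO_x^I\xrightarrow{\ \phi\ }\OO_x^J\to\M_x\to 0$. By the Remark preceding the theorem, the maps $\OO_x^I\to\OO_x^J\to\M_x$ of $\OO_x$-modules extend uniquely to morphisms of sheaves of $\OO_{\vert U_x}$-modules $\OO_{\vert U_x}^I\to\OO_{\vert U_x}^J\to\M_{\vert U_x}$, by taking the corresponding restriction morphisms at each $q\in U_x$ (equivalently, $\phi$ induces $\phi\otimes_{\OO_x}\OO_q$ on stalks). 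Let $\mathcal C$ be the cokernel of $\OO_{\vert U_x}^I\to\OO_{\vert U_x}^J$; its stalk at $q\in U_x$ is the cokernel of $\OO_q^I\xrightarrow{\phi\otimes\OO_q}\OO_q^J$, which by right-exactness of $-\otimes_{\OO_x}\OO_q$ is $(\Coker\phi)\otimes_{\OO_x}\OO_q=\M_x\otimes_{\OO_x}\OO_q$. The canonical map $\mathcal C\to\M_{\vert U_x}$ is, at the stalk $q$, the composite $\M_x\otimes_{\OO_x}\OO_q\to\M_q$, which factors (using $x\leq q$, hence $\M_x\otimes_{\OO_x}\OO_q\cong\M_q$ directly, or through any intermediate point) as an isomorphism by hypothesis. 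A morphism of sheaves on a finite space that is an isomorphism on every stalk is an isomorphism, so $\M_{\vert U_x}\cong\mathcal C$ is a cokernel of free modules, and $\M$ is quasi-coherent.

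The main obstacle I anticipate is purely bookkeeping rather than conceptual: making sure that the naturally-defined comparison morphism $\M_p\otimes_{\OO_p}\OO_q\to\M_q$ (resp. $\mathcal C\to\M_{\vert U_x}$) really is the map obtained from the universal properties of cokernel and tensor product, so that the two cokernel descriptions can be identified. This is a standard but slightly fiddly diagram chase; everything else follows from exactness of the stalk functor on finite spaces, right-exactness of tensor product, and the fact that the $U_p$ form a minimal base so that stalkwise isomorphism implies isomorphism.
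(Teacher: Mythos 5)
Your overall strategy is exactly the paper's: in one direction take a local free presentation, pass to stalks and use right-exactness of $\otimes$; in the other, present $\M_p$ by free $\OO_p$-modules, spread the presentation out over the minimal open set, and check exactness stalkwise using the hypothesis. The ``if'' direction is essentially the paper's argument, merely phrased through the cokernel sheaf $\mathcal C$ instead of checking exactness of the extended sequence directly, and it is correct.

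There is, however, a concrete slip in your ``only if'' direction: for $p\leq q$ you fix the presentation on $U_q$ and then claim ``$p,q\in U_q$''. With the paper's convention (which you yourself quote: $p\leq q$ means $q\in U_p$, i.e.\ $U_q\subseteq U_p$), the point $p$ does \emph{not} lie in $U_q$ unless $p=q$, so you cannot take the stalk at $p$ of a sequence that only lives on $U_q$; as written, the step producing the presentation $\OO_p^I\to\OO_p^J\to\M_p\to 0$ fails. The fix is just to swap the roles: take the presentation on $U_p$, the minimal open neighborhood of the \emph{smaller} point, which contains both $p$ and $q$; its stalk at $q$ gives the presentation of $\M_q$, its sections on $U_p$ (equivalently stalk at $p$) give the presentation of $\M_p$, and then tensoring by $\otimes_{\OO_p}\OO_q$ and comparing cokernels concludes, exactly as in the paper. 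With that orientation corrected, your proof is the paper's proof.
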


\begin{proof} If $\M$ is quasi-coherent, for each point $p$ one has an exact sequence:
\[ \OO_{\vert U_p}^I\to \OO_{\vert U_p}^J \to \M_{\vert U_p} \to 0.\] Taking the stalk at $q\geq p$, one obtains an exact sequence
\[ \OO_q^I\to \OO_q^J \to \M_q \to 0\] On the other hand, tensoring the exact sequence at $p$  by $\otimes_{\OO_p}\OO_q$, yields an exact sequence
\[ \OO_q^I\to \OO_q^J \to \M_p\otimes_{\OO_p}\OO_q \to 0.\] Conclusion follows.

Assume now that  $\M_p\otimes_{\OO_p}\OO_q\to\M_q$ is an isomorphism for any $p\leq q$. We solve $\M_p$ by free $\OO_p$-modules:
\[ \OO_p^I\to \OO_p^J \to \M_p \to 0.\]
 We have then morphisms $\OO_{\vert U_p}^I\to \OO_{\vert U_p}^J \to \M_{\vert U_p}\to 0$. In order to see that this sequence is exact, it suffices to take the stalk at $q\geq p$. Now, the  sequence obtained at  $q$ coincides with the one obtained at $p$ (which is exact) after tensoring by $\otimes_{\OO_p}\OO_q$, hence it is exact.
\end{proof}

\begin{ejem} Let $(X,\OO)$ be a ringed finite space, $A=\OO(X)$ and $\pi\colon (X,\OO)\to (*,A)$ the natural morphism. We know that for any $A$-module $M$, $\widetilde M:=\pi^*M$ is a quasi-coherent module on $X$. The explicit stalkwise description of $\widetilde M$ is given by: $(\widetilde M)_x=M\otimes_A\OO_x$.
\end{ejem}

\begin{cor}\label{corqc} Let $X$ be a ringed finite space with a minimum and $A=\Gamma(X,\OO)$. Then the functors
\[\aligned \{\text{Quasi-coherent $\OO$-modules} \} & \overset{\longrightarrow}\leftarrow \{ \text{$A$-modules}\} \\ \M &\to \Gamma(X,\M) \\ \widetilde M &\leftarrow M \endaligned \]
are mutually inverse.
\end{cor}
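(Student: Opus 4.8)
The plan is to realize both functors as the adjoint pair attached to the structural morphism $\pi\colon (X,\OO)\to (*,A)$, $A=\Gamma(X,\OO)$, and to show that the existence of a minimum forces both the unit and the counit of this adjunction to be isomorphisms. Recall that $\pi_*\M=\Gamma(X,\M)$ (an $A$-module) and that $\pi^*M=\widetilde M$ has stalk $(\widetilde M)_q=M\otimes_A\OO_q$ at each $q\in X$.

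First I would record the one structural consequence of the hypothesis. If $p_0$ is the minimum of $X$, then $X=U_{p_0}$, so for \emph{any} sheaf $F$ on $X$ one has $\Gamma(X,F)=F(U_{p_0})=F_{p_0}$; in particular $A=\Gamma(X,\OO)=\OO_{p_0}$ and $\Gamma(X,\M)=\M_{p_0}$ for every $\OO$-module $\M$. From this, the composition $M\mapsto\widetilde M\mapsto\Gamma(X,\widetilde M)$ is immediate:
\[\Gamma(X,\widetilde M)=(\widetilde M)_{p_0}=M\otimes_A\OO_{p_0}=M\otimes_AA=M,\]
naturally in $M$; this is precisely the unit $M\to\pi_*\pi^*M$, and it is an isomorphism.

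For the other composition let $\M$ be quasi-coherent and consider the counit $\widetilde{\Gamma(X,\M)}=\pi^*\pi_*\M\to\M$. To see it is an isomorphism of $\OO$-modules it suffices to check it on stalks, and at a point $q$ it is the canonical map
\[\Gamma(X,\M)\otimes_A\OO_q=\M_{p_0}\otimes_{\OO_{p_0}}\OO_q\longrightarrow\M_q\]
induced by the restriction $\M_{p_0}\to\M_q$. Since $p_0\leq q$, this is exactly the morphism appearing in Theorem \ref{qc}, hence an isomorphism because $\M$ is quasi-coherent. Thus $\widetilde{\Gamma(X,\M)}\to\M$ is a natural isomorphism.

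Finally I would check that the two functors do land in the stated categories — $\widetilde M$ is quasi-coherent for every $A$-module $M$ (already noted), and $\Gamma(X,\M)=\M_{p_0}$ is an $A$-module — and that the two natural transformations above exhibit the functors as mutually inverse equivalences; this last verification is formal. There is no serious obstacle: the entire content is the stalkwise criterion of Theorem \ref{qc} combined with the identification $\Gamma(X,-)\cong(-)_{p_0}$ forced by the existence of a minimum.
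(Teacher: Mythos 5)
Your proposal is correct and follows essentially the same route as the paper: the key points are exactly the identification $\Gamma(X,F)=F_{p_0}$ forced by the minimum and the stalkwise quasi-coherence criterion of Theorem \ref{qc}, which the paper states tersely and you spell out via the unit and counit of the adjunction for $\pi\colon(X,\OO)\to(*,A)$. No gaps.
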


\begin{proof} Let $p$ be the minimum of $X$. Then $U_p=X$ and for any sheaf $F$ on $X$, $F_p=\Gamma(X,F)$.
If $\M$ is a quasi-coherent module, then for any $x\in X$, $\M_x=\M_p\otimes_{\OO_p}\OO_x$. That is, $\M$ is univocally determined by its stalk at $p$, i.e., by its global sections.
\end{proof}

This corollary is a particular case of the invariance of the category of quasi-coherent mo\-du\-les under homotopies (see Theorem \ref{homotinvariance}), because any ringed finite space with a minimum $p$ is contractible to $p$ (Remark \ref{contractible}).

\begin{thm}\label{schemes} Let $S$ be a quasi-compact and quasi-separated scheme and   $\U=\{ U_1,\dots, U_n\}$ a locally affine finite  covering. Let $(X,\OO)$ be the finite space associated to $S$ and $\U$,   and $\pi\colon S\to X$ the natural morphism of ringed spaces (see Examples \ref{ejemplos}, (3) and (4)). One has:

1. For any quasi-coherent   $\OO_S$-module $\M$, $\pi_*\M$ is a quasi-coherent $\OO$-module.

2. The functors $\pi^*$ and $\pi_*$ establish an equivalence between the category of quasi-coherent  $\OO_S$-modules and the category of quasi-coherent   $\OO$-modules.

Moreover, for any open subset $U$ of $X$, the morphism $\pi^{-1}(U)\to U$ satisfies 1. and 2.
\end{thm}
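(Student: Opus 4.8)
The plan is to first reduce everything to the key local computation, namely that for a quasi-compact quasi-separated scheme, higher direct images of quasi-coherent sheaves under the map $\pi$ vanish on the minimal open subsets $U_x$, and that $\pi_*$ commutes with the relevant base changes. The essential geometric input is that for each $x\in X$ the preimage $\pi^{-1}(U_x)=U^s$ (for any $s$ with $\pi(s)=x$) is an affine scheme, by the very definition of a locally affine covering; more generally, for an inclusion $x\le y$ in $X$ one has $\pi^{-1}(U_y)\subseteq\pi^{-1}(U_x)$, an inclusion of affine opens in a scheme. This is what will let us control stalks.

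First I would prove part 1. For a quasi-coherent $\OO_S$-module $\M$, the stalk $(\pi_*\M)_x$ is $\Gamma(U_x,\pi_*\M)=\Gamma(\pi^{-1}(U_x),\M)=\Gamma(U^s,\M)$, which since $U^s$ is affine equals $M_x:=\M(U^s)$, a module over $A_x:=\OO_S(U^s)=\OO_x$. For $x\le y$ the restriction map $(\pi_*\M)_x\to(\pi_*\M)_y$ is the restriction $\M(U^s)\to\M(U^{s'})$ along an inclusion of affine opens inside a scheme, i.e. the canonical map $M_x\to M_x\otimes_{A_x}A_y$ is an isomorphism (restriction of a quasi-coherent sheaf to an affine open subset of an affine scheme is localization in the derived/flat sense — here it is exactly the statement that $\M(U^{s'})=\M(U^s)\otimes_{\OO_S(U^s)}\OO_S(U^{s'})$ because both $U^s$ and $U^{s'}$ are affine). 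Hence $(\pi_*\M)_x\otimes_{\OO_x}\OO_y\to(\pi_*\M)_y$ is an isomorphism for all $x\le y$, and by Theorem \ref{qc} $\pi_*\M$ is quasi-coherent.

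Next, part 2. There are natural adjunction morphisms $\pi^*\pi_*\M\to\M$ and $\Nc\to\pi_*\pi^*\Nc$, and the claim is that these are isomorphisms when $\M$ is a quasi-coherent $\OO_S$-module and $\Nc$ a quasi-coherent $\OO$-module. Both are local statements. For $\Nc\to\pi_*\pi^*\Nc$: it suffices to check on each $U_x$, i.e. to compare $\Nc_x$ with $\Gamma(\pi^{-1}(U_x),\pi^*\Nc)$. Since $U^s=\pi^{-1}(U_x)$ is affine and $\pi|_{U^s}\colon U^s\to U_x$ factors through the punctual space $(*,\OO_x)$ with $U_x$ having minimum $x$, one uses Corollary \ref{corqc} together with the fact that $\pi^*\Nc$ restricted to $U^s$ is $\widetilde{\Nc_x}$ over the affine scheme $\operatorname{Spec}A_x$, whose global sections give back $\Nc_x$ because $\Nc_x=\Nc_x\otimes_{A_x}A_x$. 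For $\pi^*\pi_*\M\to\M$: this is a morphism of quasi-coherent $\OO_S$-modules, so it suffices to check it is an isomorphism on an affine open cover; take the cover $\{U^s\}$. On $U^s=\operatorname{Spec}A_x$ the sheaf $\pi^*\pi_*\M$ restricts to $\widetilde{(\pi_*\M)_x}=\widetilde{\M(U^s)}$, which is exactly $\M|_{U^s}$ since $\M$ is quasi-coherent on the affine scheme $U^s$. So both adjunction maps are isomorphisms, giving the equivalence. Fully faithfulness and essential surjectivity both fall out of these two isomorphisms.

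Finally, the ``moreover'' clause: for an open subset $U\subseteq X$, the preimage $\pi^{-1}(U)$ is a quasi-compact quasi-separated scheme (it is covered by the finitely many affines $\pi^{-1}(U_x)$, $x\in U$, with quasi-compact quasi-separated intersections by the same argument), the induced covering of $\pi^{-1}(U)$ is again locally affine, and $U$ (with the restricted sheaf $\OO|_U$) is precisely the associated ringed finite space — because $U$, being open in the finite space $X$, is a down-set... wait, it is an up-set for the minimal-open order, but in any case the minimal opens of $U$ coincide with the $U_x$ for $x\in U$, and $\pi^{-1}(U_x)$ is unchanged. So parts 1 and 2 apply verbatim to $\pi^{-1}(U)\to U$.

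The main obstacle I anticipate is the careful bookkeeping in the two base-change/localization identifications — specifically verifying that for $x\le y$ the restriction $\M(U^s)\to\M(U^{s'})$ really is the flat base change $-\otimes_{\OO_x}\OO_y$, which rests on both $U^s$ and $U^{s'}$ being affine and on $U^{s'}$ being an open subscheme of $U^s$; once that single fact is nailed down, everything else is a formal consequence of Theorem \ref{qc} and Corollary \ref{corqc}.
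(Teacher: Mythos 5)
Your proposal is correct and follows essentially the same route as the paper: part 1 via the base-change fact $\M(U)\otimes_{\OO_S(U)}\OO_S(V)\cong\M(V)$ for nested affine opens together with Theorem \ref{qc}, and part 2 by reducing both adjunction maps to the affine opens $\pi^{-1}(U_x)=\Spec\OO_x$, with the restriction $\pi^{-1}(U)\to U$ handled by noting it is the same situation again. The only cosmetic difference is that the paper checks the unit and counit stalkwise (at points of $S$, resp.\ localizing over $\pi^{-1}(U_p)$), while you identify the restrictions over each $U^s$ with associated sheaves via Corollary \ref{corqc} and $\Gamma(\Spec A,\widetilde M)=M$ --- the same underlying affine-scheme facts.
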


\begin{proof} 1. We have to prove that $(\pi_*\M)_p\otimes_{\OO_p}\OO_q\to(\pi_*\M)_q$ is an isomorphism for any $p\leq q$. This is a consequence of the following fact: if $V\subset U$ are open and affine subsets of a scheme $S$ and $\M$ is a quasi-coherent module on $S$, the natural map $\M(U)\otimes_{\OO_S(U)}\OO_S(V)\to\M(V)$ is an isomorphism.

2. Let $\M$ be a quasi-coherent module on $S$. Let us see that the natural map $\pi^*\pi_*\M\to\M$ is an isomorphism. Taking the stalk at $s\in S$, one is reduced to the following fact: if $U$ is an affine open subset of  $S$, then for any $s\in U$ the natural map $\M(U)\otimes_{\OO_S(U)}\OO_{S,s}\to \M_s$ is an isomorphism.

To conclude 2., let $\Nc$ be a quasi-coherent module on $X$ and let us see that the natural map $\Nc\to\pi_*\pi^*\Nc$ is an isomorphism. Taking the stalk at $p\in X$, we have to prove that $\Nc_p\to (\pi^*\Nc)(U)$ is an isomorphism, with $U=\pi^{-1}(U_p)$. Notice that $U$ is an affine open subscheme  and $\OO_S(U)=\OO_p$. Thus, it suffices to prove that, for any $s\in U$, $\Nc_p\otimes_{\OO_p}\OO_{S,s}\to (\pi^*\Nc)(U)\otimes_{\OO_p}\OO_{S,s}$ is an isomorphism. Denoting $q=\pi(s)$, one has that $(\pi^*\Nc)(U)\otimes_{\OO_p}\OO_{S,s}= (\pi^*\Nc)_s=\Nc_q\otimes_{\OO_q}\OO_{S,s}$. Since $\Nc$ is quasi-coherent, $\Nc_q=\Nc_p\otimes_{\OO_p}\OO_q$. Conclusion follows.

Finally, these same proofs work for $\pi\colon \pi^{-1}(U)\to U$, for any open subset $U$ of $X$.
\end{proof}

\begin{thm}\label{qc-fts} Let $X$ be a finite topological space $(\OO=\ZZ$). A sheaf $\M$ of abelian groups on $X$ is quasi-coherent if and only if it is locally constant, i.e., for each $p\in X$, $\M_{\vert U_p}$ is (isomorphic to) a constant sheaf. If $X$ is connected, this means that there exists an abelian group  $G$ such that $\M_{\vert U_p}=G$ for every $p$. If $X$ is not connected, the latter holds in each connected component.
\end{thm}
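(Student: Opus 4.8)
The plan is to reduce everything to Theorem \ref{qc}. Since here $\OO=\ZZ$ is the constant sheaf, $\OO_p=\ZZ$ for every $p$ and $r_{pq}=\Id$ for every $p\le q$, so the canonical morphism $\M_p\otimes_{\OO_p}\OO_q\to\M_q$ of Theorem \ref{qc} is nothing but the restriction morphism $r_{pq}\colon\M_p\to\M_q$ of the sheaf $\M$ itself. Hence \emph{$\M$ is quasi-coherent if and only if $r_{pq}\colon\M_p\to\M_q$ is an isomorphism for every $p\le q$}. Throughout I write $\underline A$ for the constant sheaf associated with an abelian group $A$ (on whichever finite space is clear from context).

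Suppose $\M$ is quasi-coherent and fix $p\in X$. On the open subset $U_p=\{q:p\le q\}$ I would exhibit an isomorphism from the constant sheaf $\underline{\M_p}$ to $\M_{\vert U_p}$. Recall that a morphism of sheaves of abelian groups on $U_p$ is the same datum as a family of homomorphisms on stalks $\varphi_q\colon(\underline{\M_p})_q=\M_p\to\M_q$, $q\in U_p$, commuting with the restriction morphisms; since those of $\underline{\M_p}$ are identities, this means $r_{qq'}\circ\varphi_q=\varphi_{q'}$ for all $q\le q'$ in $U_p$. The choice $\varphi_q:=r_{pq}$ satisfies precisely this, for it is the transitivity relation $r_{qq'}\circ r_{pq}=r_{pq'}$ obeyed by $\M$; and each $\varphi_q=r_{pq}$ is an isomorphism by quasi-coherence. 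So $\M_{\vert U_p}\cong\underline{\M_p}$, and $\M$ is locally constant. Conversely, if $\M$ is locally constant and $p\le q$, fix an isomorphism $\psi$ of $\M_{\vert U_p}$ with a constant sheaf; naturality of $\psi$ with respect to $p\le q$ shows that $\psi_q\circ r_{pq}\circ\psi_p^{-1}$ is the identity, hence $r_{pq}$ is an isomorphism, and $\M$ is quasi-coherent by Theorem \ref{qc}.

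It remains to pin down the groups. Assume first that $X$ is connected. I would invoke the classical fact that the equivalence relation on $X$ generated by comparability has $X$ as its unique class: indeed any such class $A$ satisfies $A=\bigcup_{p\in A}U_p$ (if $p\in A$ and $q\in U_p$ then $p\le q$, so $q$ belongs to the class of $p$), hence $A$ is open; its complement is a union of other classes, hence open too; as $A\neq\emptyset$ and $X$ is connected, $A=X$. Therefore any two points $p,q$ are linked by a chain $p=x_0,x_1,\dots,x_n=q$ of pairwise comparable consecutive points, and composing the isomorphisms $r_{x_ix_{i+1}}$ (or their inverses) yields $\M_p\cong\M_q$. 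Fixing a base point $p_0$ and putting $G:=\M_{p_0}$, we obtain $\M_{\vert U_p}\cong\underline{\M_p}\cong\underline G$ for every $p$. If $X$ is not connected, its finitely many connected components are open (each one is closed, and the complement of one is the union of the remaining ones, hence closed as well), and applying the connected case to the restriction of $\M$ to each component gives the final assertion.

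The argument is essentially formal once Theorem \ref{qc} is at hand; the only points that deserve attention are the translation of a sheaf morphism on $U_p$ into a compatible system of stalk maps —so that the cocycle identity of $\M$ is exactly what makes $q\mapsto r_{pq}$ a morphism of sheaves— and the description of connectedness of a finite topological space by chains of comparable points, for which one may refer to \cite{Barmak}.
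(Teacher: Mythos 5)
Your argument is correct and is essentially the paper's own proof: since $\OO=\ZZ$ is constant, the map $\M_p\otimes_{\OO_p}\OO_q\to\M_q$ of Theorem \ref{qc} is just the restriction $r_{pq}$, so quasi-coherence amounts to all restrictions being isomorphisms, which is the locally constant condition. You merely spell out the details the paper leaves implicit (the explicit isomorphism $\underline{\M_p}\cong\M_{\vert U_p}$ via $\varphi_q=r_{pq}$, and the chain-of-comparable-points argument identifying the fibre group on a connected component), and these are fine.
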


\begin{proof} Since $\OO$ is the constant sheaf $\ZZ$, the quasi-coherence condition $$``\M_p\otimes_{\OO_p}\OO_q\to\M_q \text{ is an isomorphism}"$$ is equivalent to say that the restriction morphisms $\M_p\to\M_q$ are isomorphisms, i.e., $\M_{\vert U_p}$ is isomorphic to a constant sheaf.
\end{proof}

Now let us prove a topological analog of Theorem \ref{schemes}. First let us recall a basic result about locally constant sheaves and the fundamental group.

\medskip
\noindent{\it Locally constant sheaves and the fundamental group}.
\medskip

Let $S$ be a path connected, locally path connected and locally simply connected topological space and let $\pi_1(S)$ be its fundamental group. Then there is an equivalence between the category of locally constant sheaves on $S$ (with fibre type $G$, an abelian group) and the category of representations of $\pi_1(S)$ on $G$ (i.e., morphisms of groups $\pi_1(S)\to \Aut_{\ZZ-\text{mod.}} G$). In particular, $S$ is simply connected if and only if any locally constant sheaf (of abelian groups) on $S$ is constant.

%
%
%
\medskip

Now, the topological analog of Theorem \ref{schemes} is:

\begin{thm}\label{fin-sp-assoc-top} Let $S$ be a path connected, locally path connected and locally simply connected topological space and  let $\U=\{ U_1,\dots,U_n\}$ be a locally simply connected finite covering of $S$, i.e., for each $s\in S$, the intersection $U^s:=\underset{s\in U_i}\cap U_i$ is simply connected. Let $X$ be the associated finite topological space and $\pi\colon S\to X$ the natural continous map. Then

1. For any locally constant sheaf $\Lc$ on $S$, $\pi_*\Lc$ is a locally constant sheaf on $X$.

2. The functors $\pi^*$ and $\pi_*$ establish an equivalence between the category of locally constant sheaves on $S$ and the category of locally constant sheaves on   $X$. In other words, $\pi_1(S)\to\pi_1(X)$ is an isomorphism.

Moreover, if the $U^s$ are homotopically trivial, then $\pi\colon S\to X$ is a weak homotopy equivalence, i.e., $\pi_i(S)\to\pi_i(X)$ is an isomorphism for any $i$.
\end{thm}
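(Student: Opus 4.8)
The plan is to follow the proof of Theorem~\ref{schemes} essentially verbatim, replacing its affine inputs by topological ones: the role played there by the isomorphism $\M(U)\otimes_{\OO_S(U)}\OO_S(V)\xrightarrow{\sim}\M(V)$ for $V\subset U$ affine is played here by the recalled fact that a locally constant sheaf on a simply connected space (which is automatically path connected, and inherits local path connectedness and local simple connectedness from $S$) is constant, so that its restriction maps between connected opens are isomorphisms. Two elementary facts are used throughout: for a sheaf $F$ on a finite space, $F_p=F(U_p)$; and $\pi^{-1}(U_{[s]})=U^s$ (Example~\ref{covering}). Note also that here $\pi^{*}=\pi^{-1}$, since both $S$ and $X$ carry the constant ring $\ZZ$ (indeed $\pi_*\ZZ=\ZZ$ on $X$ because each $U^s$ is connected). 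The one genuinely new ingredient is the final assertion, for which I would appeal to McCord's theorem.

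For part~1: writing $p=[s]$, one has $(\pi_*\Lc)_p=\Lc(\pi^{-1}(U_p))=\Lc(U^s)$, and since $U^s$ is simply connected, $\Lc_{\vert U^s}$ is constant. Hence for $p\le q$, say $q=[s']$ so that $U^{s'}\subseteq U^s$, the restriction $\Lc(U^s)\to\Lc(U^{s'})$ is the restriction map of a constant sheaf between connected opens, hence an isomorphism. Thus $(\pi_*\Lc)_p\to(\pi_*\Lc)_q$ is an isomorphism for every $p\le q$, and $\pi_*\Lc$ is locally constant by (the proof of) Theorem~\ref{qc-fts}.

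For part~2, I would check that the unit $\Nc\to\pi_*\pi^*\Nc$ and the counit $\pi^*\pi_*\Lc\to\Lc$ are isomorphisms stalkwise. At $s\in S$: $(\pi^*\pi_*\Lc)_s=(\pi_*\Lc)_{[s]}=\Lc(U^s)$, and the canonical map to $\Lc_s$ is an isomorphism because $\Lc_{\vert U^s}$ is constant and $U^s$ connected. At $p=[s]\in X$: $(\pi_*\pi^*\Nc)_p=(\pi^{-1}\Nc)(U^s)$; since $\pi(U^s)\subseteq U_p$ and $\Nc_{\vert U_p}$ is the constant sheaf $\Nc_p$, the sheaf $(\pi^{-1}\Nc)_{\vert U^s}$ is the constant sheaf $\Nc_p$ on the connected space $U^s$, and the canonical map recovers $\Nc_p$. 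Hence $\pi^*$ and $\pi_*$ are mutually inverse, proving part~2. The reformulation ``$\pi_1(S)\to\pi_1(X)$ is an isomorphism'' then follows by combining this with the recalled equivalence between locally constant sheaves and representations of the fundamental group, on both $S$ and $X$; the same stalk computation gives the analogous equivalence for locally constant sheaves of \emph{sets}, and an equivalence of those categories compatible with the fibre functors at $s$ and $\pi(s)$ forces $\pi_1(S)\cong\pi_1(X)$. (Alternatively this isomorphism, and the bijection on $\pi_0$, is the case $n=1$ of the argument below.)

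For the final assertion, suppose the $U^s$ are homotopically trivial. I would apply McCord's theorem to $\pi\colon S\to X$ and the minimal basis $\{U_p\}_{p\in X}$ of $X$: the preimages are the $U^s=\pi^{-1}(U_p)$, and $\{U^s\}_{s\in S}$ is an open cover of $S$ which is basis-like. Indeed, if $s\in U^{s_1}\cap U^{s_2}$, then, since $U^{s_i}=\pi^{-1}(U_{[s_i]})$, we get $[s]\in U_{[s_1]}\cap U_{[s_2]}$, hence $U_{[s]}\subseteq U_{[s_1]}\cap U_{[s_2]}$, hence $U^s=\pi^{-1}(U_{[s]})\subseteq U^{s_1}\cap U^{s_2}$; moreover $\{U^s\}$ is exactly the pullback of the basis $\{U_p\}$. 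On each member, $\pi\colon U^s\to U_{[s]}$ is a map between the homotopically trivial space $U^s$ and $U_{[s]}$, which is contractible because it has minimum $[s]$ (Remark~\ref{contractible}); hence it is a weak homotopy equivalence. McCord's theorem then yields that $\pi$ is a weak homotopy equivalence. The hard part will be precisely this last step: invoking McCord's theorem in the local-to-global form ``a map that is a weak homotopy equivalence over every member of a basis-like open cover is itself a weak homotopy equivalence'' and verifying its hypotheses — above all that $\{U^s\}_{s\in S}$ is basis-like — while everything else reduces, via $F_p=F(U_p)$ and $\pi^{-1}(U_{[s]})=U^s$, to the single input that locally constant sheaves on simply connected spaces are constant.
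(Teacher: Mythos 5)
Your proposal is correct and follows essentially the same route as the paper: stalkwise verification of 1.\ and of the unit/counit isomorphisms using that a locally constant sheaf on the simply connected set $U^s=\pi^{-1}(U_{[s]})$ is constant, together with an appeal to McCord's theorem (\cite{Barmak}, Theorem 1.4.2) for the final assertion. The only difference is that you spell out the hypotheses of McCord's theorem (the minimal basis $\{U_p\}$ as the basis-like cover, with $U^s\to U_{[s]}$ a weak homotopy equivalence), which the paper leaves to the citation.
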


\begin{proof} Let us recall that, on a simply connected space, every locally constant sheaf is constant. Let $x\leq x'$ in $X$, and put $x=\pi(s)$, $x'=\pi(s')$. Then $(\pi_*\Lc)_x \to(\pi_*\Lc)_{x'}$ is the restriction morphism $\Gamma(U^s,\Lc)\to \Gamma(U^{s'},\Lc)$, which is an isomorphism because $\Lc$ is a constant sheaf on $U^s$.

If $\Lc$ is a locally constant sheaf on $S$, the natural morphism $\pi^*\pi_*\Lc\to\Lc$ is an isomorphism, since taking fibre at $s\in S$ one obtains the morphism $\Gamma(U^s,\Lc)\to \Lc_s$, which is an isomorphism because $\Lc$ is a constant sheaf on $U^s$. Finally, if $\Nc$ is a locally constant sheaf on $X$, the natural map $\Nc\to \pi_*\pi^*\Nc$ is an isomorphism: taking fibre at a point $x=\pi(s)$ one obtains the morphism $\Nc_x\to \Gamma(U^s, \pi^*\Nc)$, which is an isomorphism because $\Nc$ is a constant sheaf on $U_x$ (and then $\pi^*\Nc$ is a constant sheaf on $U^s$).

Finally, if the $U^s$ are homotopically trivial, then  $\pi_i(S)\to\pi_i(X)$ is an isomorphism for any $i\geq 0$ by McCord's theorem (see \cite{Barmak}, Theorem 1.4.2).
\end{proof}

\begin{rem} The same proof works for a more general statement: Let $S$ and $T$ be  path connected, locally path connected and locally simply connected topological spaces, $f\colon S\to T$ a continuous map  such that there exists a basis like open (and connected) cover $\U$ of $T$ such that $f^{-1}(U)$ is connected and $\pi_1(f^{-1}(U))\to \pi_1(U)$ is an isomorphism for every $U\in \U$. Then $\pi_1(S)\to\pi_1(T)$ is an isomorphism. It is an analogue of McCord's theorem  for the fundamental group. See also \cite{Quillen}, Proposition 7.6.
\end{rem}

\begin{rems} \begin{enumerate} 
\item Theorems \ref{schemes} and \ref{fin-sp-assoc-top} are not true for non quasi-coherent modules. For example, if $S$ is a homotopically trivial topological space and $\U=\{ S\}$, then the associated finite space is just a point. If $\pi^*$ were an equivalence between the categories of sheaves on $S$ and $X$, this would imply that any sheaf on $S$ is constant. This is not true unless $S$ is a point.
\item Theorems \ref{schemes} and \ref{fin-sp-assoc-top} are not true for non locally affine (resp. locally simply connected) coverings. For example take a scheme $S$ and $\U=\{ S\}$. The associated finite space is just a point. Then $\pi^*$ is an equivalence between quasi-coherent modules if and only if $S$ is affine.
\end{enumerate}
\end{rems}

\section{Homotopy}

For this section, we shall follow the lines of \cite{Barmak}, section 1.3, generalizing them to the ringed case.

Let $(X,\OO_X)$ and $(Y,\OO_Y)$ be two ringed spaces and let $(X\times Y,\OO_{X\times Y})$ the product ringed space: the topological space $X\times Y$ is the ordinary topological product and the sheaf of rings $\OO_{X\times Y}$ is defined as $\OO_{X\times Y}=\pi_X^{-1}\OO_X\otimes_\ZZ\pi_Y^{-1}\OO_Y$, where   $\pi_X$, $\pi_Y$ are the projections of $X\times Y$ onto $X$ and $Y$ respectively.

Let us denote $I=[0,1]$, the unit interval. It is a ringed space (with $\OO_I=\ZZ$). For any ringed space $(X,\OO_X)$, the ringed space $X\times I$ is given by the topological space $X\times I$ and the sheaf of rings $\OO_{X\times I}=\pi_X^{-1}\OO_X$. Then, for any open subsets $U\subseteq X$   and $V\subseteq I$, $\OO_{X\times I}(U\times V)=\OO_X(U)^{\# V}$, where $\# V$ denotes the number of connected components of $V$.

For any $t\in I$, one has a morphism of ringed spaces $i_t\colon X\to X\times I$, defined by the continuous map $i_t(x)=(x,t)$ and the identity morphism of sheaves of rings $i_t^{-1}\OO_{X\times I}=\OO_X\to \OO_X$.

\begin{defn} Let $f,g\colon X\to Y$ be two morphisms of ringed spaces. We say that $f$ and $g$ are {\it homotopy equivalent}, $f\sim g$, if there exists a morphism of ringed spaces $H\colon X\times I\to Y$ such that $H_0=f$ and $H_1=g$ (for any $t\in I$, $H_t\colon X\to Y$ is the composition of $i_t\colon X\to X\times I$ with $H$)
\end{defn}

We can then define the homotopy equivalence between ringed finite spaces:

\begin{defn} Two  ringed finite spaces $X$ and $Y$ are said to be {\it  homotopy equivalent}, denoted by $X\sim Y$, if there exist morphisms
$f\colon X\to Y$ and $g\colon Y\to X$ such that $g\circ f \sim \Id_X$ and $f\circ g\sim \Id_Y$.
\end{defn}

Let $f,g\colon X\to Y$ be two morphisms of ringed spaces, $S$ a subspace of $X$. We leave the reader to define the notion of being homotopic relative to $S$ and hence the notion of a strong deformation retract.

\subsection{Homotopy of ringed finite spaces}

Let us now reduce to ringed finite spaces. Let $X$, $Y$ be  finite topological spaces and $\Hom(X,Y)$ the set of continuous maps, which is a finite set. This set has a preorder, the pointwise preorder:
\[ f\leq g \iff f(x)\leq g(x) \text{ for any } x\in X,\]
hence $\Hom(X,Y)$ is a finite topological space.

It is easy to prove that two continuous maps $f,g\colon X\to Y$ are homotopy equivalent  if and only if they belong to the same connected component of $\Hom(X,Y)$. In other words, if we denote $f\equiv g$ if either $f\leq g$ or $f\geq g$, then  $f\sim g$ if and only if there exists a sequence
\[ f=f_0\equiv f_1 \equiv \cdots \equiv f_n=g,\qquad f_i\in\Hom(X,Y)\]

Assume now that $X$ and $Y$ are ringed finite spaces and $\Hom(X,Y)$ is the set of morphisms of ringed spaces. It is no longer a finite set, however we can define a preorder relation:

\begin{defn} Let $f,g\colon X\to Y$ be two morphisms of ringed spaces. We say that $f\leq g$ if:

(1) $f(x)\leq g(x)$ for any $x\in X$.

(2) For any $x\in X$ the triangle
\[ \xymatrix{\OO_{f(x)}\ar[rr]^{r_{f(x)g(x)}}\ar[rd]_{f^\#_x} & & \OO_{g(x)}\ar[ld]^{g^\#_x}\\ & \OO_x & }\] is commutative. We shall denote by $f\equiv g$ if either $f\leq g$ or $f\geq g$.
\end{defn}

\begin{rems} \label{rem} 

(a) Condition (1) is equivalent to say that for any open subset $V$ of $Y$, one has $f^{-1}(V)\subseteq g^{-1}(V)$. Thus, for any sheaf $F$ on $X$, one has the restriction morphism $F(g^{-1}(V))\to F(f^{-1}(V))$, i.e., a morphism of sheaves $ g_*F\to f_*F$. By adjunction, one has, for any sheaf $G$ on $Y$, a morphism of sheaves $f^{-1}G\to g^{-1}G$,  whose stalkwise description at a point $x$ is just the restriction morphism $r_{f(x)g(x)}\colon G_{f(x)}\to G_{g(x)}$. Thus, condition (2) is equivalent to say that the triangle

\[ \xymatrix{f^{-1}\OO_Y\ar[rr] \ar[rd]_{f^\#} & & g^{-1}\OO_Y\ar[ld]^{g^\#}\\ & \OO_X & }\] is commutative, or equivalently, that the diagram  \[ \xymatrix{g_*\OO_X\ar[rr]  & & f_*\OO_X \\ & \OO_Y\ar[ul]^{g_\#} \ar[ur]_{f_\#} & }\] is commutative.

(b) \label{rem} If $f(x)=g(x)$ for any $x\in X$ (i.e., $f$ and $g$ coincide as continuous maps) and $f\leq g$, then $f=g$.
\end{rems}

\begin{prop} Let $f,g\colon X\to Y$ be two morphisms of ringed finite spaces. Then $f$ and $g$ are  homotopy equivalent  if and only if there exists a sequence:
\[ f=f_0\equiv f_1 \equiv \cdots \equiv f_n=g,\qquad f_i\in\Hom(X,Y) \]
\end{prop}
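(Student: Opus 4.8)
The plan is to mimic Stong's argument for finite topological spaces (as presented by Barmak), adding at each step the verification of condition (2) on the structure morphisms. First I would record that homotopy of morphisms of ringed spaces is an equivalence relation: reflexivity via the constant homotopy; symmetry by precomposing a homotopy with $\Id_X\times\sigma$, where $\sigma\colon I\to I$ is $t\mapsto 1-t$ (this preserves $\OO_{X\times I}=\pi_X^{-1}\OO_X$); transitivity by the usual concatenation of homotopies, glueing the two structure morphisms along $X\times\{1/2\}$ — all routine. Granting this, the implication ``$\Leftarrow$'' reduces to showing that $f\le g$ implies $f\sim g$, and the implication ``$\Rightarrow$'' amounts to chopping $I$ into finitely many small pieces, each producing a comparison between two slices of the homotopy, and assembling the resulting fence.

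For ``$f\le g\Rightarrow f\sim g$'': define $H\colon X\times I\to Y$ on underlying spaces by $H(x,0)=f(x)$ and $H(x,t)=g(x)$ for $t\in(0,1]$. Using the first remark above (condition (1) gives $f^{-1}(V)\subseteq g^{-1}(V)$ for every open $V\subseteq Y$) one computes $H^{-1}(V)=(f^{-1}(V)\times I)\cup(g^{-1}(V)\times(0,1])$, which is open, so $H$ is continuous. For the structure morphism I would identify $H_*\OO_{X\times I}$: covering $H^{-1}(V)$ by the two opens above, whose intersection is $f^{-1}(V)\times(0,1]$, and using $\OO_{X\times I}(W\times\Lambda)=\OO_X(W)$ for connected $\Lambda$, the sheaf property gives $\OO_{X\times I}(H^{-1}(V))\cong\OO_X(g^{-1}(V))$ naturally in $V$; hence $H_*\OO_{X\times I}\cong g_*\OO_X$ as sheaves of rings, and one sets $H_\#:=g_\#$. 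It then remains to check $H\circ i_0=f$ and $H\circ i_1=g$ as morphisms of ringed spaces, which is done on stalks of the structure morphism: $(H\circ i_t)^\#_x=(H^\#)_{(x,t)}$ equals $g^\#_x$ for $t>0$, and for $t=0$ equals $g^\#_x\circ r_{f(x)g(x)}$, which is $f^\#_x$ precisely by condition (2) in the definition of $f\le g$. So $H_0=f$, $H_1=g$, whence $f\sim g$; and given a fence $f=f_0\equiv\cdots\equiv f_n=g$ each step gives $f_i\sim f_{i+1}$, and transitivity yields $f\sim g$.

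For ``$\Rightarrow$'': let $H\colon X\times I\to Y$ realize $f\sim g$ and write $H_t:=H\circ i_t\in\Hom(X,Y)$. The crux is the claim that for every $t\in I$ there is a relatively open interval $J\ni t$ with $H_t\le H_{t'}$ for all $t'\in J$. Once this holds, the $J$'s form an open cover of the compact $I$; choosing a partition $0=s_0<\cdots<s_m=1$ finer than a Lebesgue number of this cover, each $[s_k,s_{k+1}]$ lies inside some $J$ belonging to a point $\tau_k$, so $H_{s_k}\ge H_{\tau_k}\le H_{s_{k+1}}$, and concatenating produces the fence $f=H_{s_0}\equiv H_{\tau_0}\equiv H_{s_1}\equiv\cdots\equiv H_{s_m}=g$. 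To prove the claim, fix $t$; for each $x\in X$ the open set $H^{-1}(U_{H_t(x)})$ contains $(x,t)$, hence contains $U_x\times J_x$ for some relatively open interval $J_x\ni t$; put $J=\bigcap_{x\in X}J_x$, again an interval since $X$ is finite. For $t'\in J$, evaluating $H$ on $(x,t')\in U_x\times J$ gives $H_t(x)\le H_{t'}(x)$, i.e. condition (1). For condition (2), fix $x$, set $p=H_t(x)$, and restrict $H_\#$ over $U_p$ along $U_x\times J\subseteq H^{-1}(U_p)$; since $\OO_{X\times I}(U_x\times J)=\OO_x$ this is a ring map $\psi\colon\OO_p\to\OO_x$, and comparing with the stalks of the structure morphism at $(x,t)$ and at $(x,t')$ shows $\psi=(H_t)^\#_x$ and $\psi=(H_{t'})^\#_x\circ r_{H_t(x)H_{t'}(x)}$; equating these is exactly condition (2) for $H_t\le H_{t'}$.

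The main obstacle — and the only place where the ringed setting adds something beyond the topological argument of Barmak — is the two ``condition (2)'' verifications: that a homotopy's structure morphism forces the comparison triangle of restriction maps to commute, and conversely that a prescribed commuting triangle is realized by a morphism out of $X\times I$. Both reduce to careful bookkeeping with $\OO_{X\times I}=\pi_X^{-1}\OO_X$, in particular with its sections over sets of the form $U\times(\text{interval})$ and over the non-product opens $H^{-1}(V)$; I expect the clean statement $H_*\OO_{X\times I}\cong g_*\OO_X$ above to be the right way to organize the computation in the ``$\Leftarrow$'' direction, and the ring map $\psi$ above to be the right device in the ``$\Rightarrow$'' direction.
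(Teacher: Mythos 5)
Your argument is correct, and its two halves relate differently to the paper's proof. For ``$f\le g\Rightarrow f\sim g$'' you use exactly the paper's homotopy ($H(x,0)=f(x)$, $H(x,t)=g(x)$ for $t>0$); your identification $H_*\OO_{X\times I}\cong g_*\OO_X$ is just a cleaner packaging of the paper's exact sequence $0\to H_*\OO_{X\times I}\to f_*\OO_X\times g_*\OO_X\to f_*\OO_X$, and your stalkwise check that $H_0=f$ uses condition (2) in the same way the paper invokes Remark \ref{rem}(a). The genuine difference is in the ``only if'' direction: the paper only proves the lemma that a homotopy satisfying $H(x,t)=H(x,t')$ for all $t,t'>0$ yields $H_0\le H_1$, and leaves implicit how an arbitrary homotopy is reduced to a fence of such comparisons; you instead prove directly that every $t$ has a relatively open interval $J\ni t$ with $H_t\le H_{t'}$ for all $t'\in J$ (condition (1) from $U_x\times J_x\subseteq H^{-1}(U_{H_t(x)})$, condition (2) from the ring map $\psi\colon\OO_p\to\OO_x$ obtained by restricting $H_\#$ to $U_x\times J$, using $\OO_{X\times I}(U_x\times J)=\OO_x$), and then assemble the fence by a Lebesgue-number argument. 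This buys a complete and explicit proof of the forward implication for arbitrary homotopies, at the price of a slightly heavier local computation; the paper's route is shorter on the page but its second lemma is really a special case of your claim. One small caveat: you do lean on transitivity of $\sim$ (concatenation of homotopies) to chain the steps $f_i\sim f_{i+1}$; the gluing of the two structure morphisms along $X\times\{1/2\}$ does work (the germs agree there because $H_1=K_0$, and agreement of germs at $x$ propagates to the minimal open $U_x$), but it deserves the one-line verification rather than being dismissed outright — alternatively you could avoid it by realizing the whole fence by a single homotopy on a subdivided interval, which is essentially how the paper's first lemma is meant to be iterated.
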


\begin{proof} It is a consequence of the following lemmas.
\end{proof}

\begin{lem} Let $f,g\colon X\to Y$ be two morphisms between ringed finite spaces. If $f\leq g$, then $f$ is homotopy equivalent to $g$.
\end{lem}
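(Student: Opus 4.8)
The plan is to build an explicit homotopy $H\colon X\times I\to Y$ with $H_0=f$ and $H_1=g$, imitating the construction for finite topological spaces while carrying along the sheaves of rings. First I would define the underlying continuous map by
\[ H(x,0)=f(x),\qquad H(x,t)=g(x)\ \text{ for } t>0. \]
To check continuity, recall that an open subset $V$ of a finite space is up-closed for $\le$; since $f,g$ are monotone and $f\le g$, this gives $f^{-1}(V)\subseteq g^{-1}(V)$ for every open $V\subseteq Y$, and hence
\[ H^{-1}(V)=\bigl(f^{-1}(V)\times[0,1)\bigr)\cup\bigl(g^{-1}(V)\times(0,1]\bigr), \]
which is open in $X\times I$. (Here it matters that the interval is collapsed toward $t=0$ and not toward $t=1$: with the Alexandroff convention of this paper it is this choice that produces a continuous map.)

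Next I would produce the morphism of sheaves of rings, most conveniently described as a morphism $H_\#\colon\OO_Y\to H_*\OO_{X\times I}$. Fix $V\subseteq Y$ open and write $A=f^{-1}(V)\times[0,1)$, $B=g^{-1}(V)\times(0,1]$, so that $A\cup B=H^{-1}(V)$ and $A\cap B=f^{-1}(V)\times(0,1)$. Each of $A$, $B$, $A\cap B$ is the product of an open of $X$ with a connected open of $I$, so by the formula $\OO_{X\times I}(U\times W)=\OO_X(U)^{\#W}$ together with the sheaf axiom,
\[ \OO_{X\times I}\bigl(H^{-1}(V)\bigr)=\OO_X\bigl(f^{-1}(V)\bigr)\times_{\OO_X(f^{-1}(V))}\OO_X\bigl(g^{-1}(V)\bigr), \]
where the first arrow in the fibre product is the identity and the second is the restriction morphism of $\OO_X$; concretely, a section over $H^{-1}(V)$ is a pair $(a,b)$ with $a\in\OO_X(f^{-1}(V))$, $b\in\OO_X(g^{-1}(V))$ and $b|_{f^{-1}(V)}=a$. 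I then define $H_\#(V)(\omega)=\bigl(f_\#(\omega),g_\#(\omega)\bigr)$ for $\omega\in\OO_Y(V)$. This pair does lie in the fibre product: the required equality $g_\#(\omega)|_{f^{-1}(V)}=f_\#(\omega)$ is precisely condition $(2)$ in the definition of $f\le g$ (equivalently the commutativity, noted in the remark following that definition, of $f^{-1}\OO_Y\to g^{-1}\OO_Y\to\OO_X$), which one verifies stalkwise at each $x\in f^{-1}(V)$ via $g^\#_x\circ r_{f(x)g(x)}=f^\#_x$. Compatibility with restrictions in $V$ and the ring-homomorphism property are immediate from those of $f_\#$ and $g_\#$.

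It remains to identify $H\circ i_0$ with $f$ and $H\circ i_1$ with $g$ as morphisms of ringed spaces. The underlying maps agree by construction, and since $i_t^\#$ is the identity, on the sheaf level one has $i_0^{-1}H^{-1}(V)=f^{-1}(V)$, and pulling back along $i_0$ a section $(a,b)$ over $H^{-1}(V)$ returns its ``$t=0$ component'' $a$ (restrict to $A$ first); hence $(H\circ i_0)_\#(\omega)=f_\#(\omega)$. Symmetrically $i_1^{-1}H^{-1}(V)=g^{-1}(V)$ and $(H\circ i_1)_\#(\omega)=g_\#(\omega)$. Thus $H$ is a homotopy from $f$ to $g$, so $f\sim g$. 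The only genuinely delicate points are choosing the direction of the homotopy forced by the order convention used here and the identification of $\OO_{X\times I}$ over the two-rectangle open set $H^{-1}(V)$; once that fibre-product description is in hand, the definition of $H_\#$ and the endpoint verifications are purely formal, the essential input being condition $(2)$ of $f\le g$.
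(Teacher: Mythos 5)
Your proof is correct and follows essentially the same route as the paper: the same homotopy $H$ (equal to $f$ at $t=0$ and $g$ for $t>0$), the same two-piece open decomposition of $H^{-1}(V)$ leading to the fibre-product (exact sequence) description of $\OO_{X\times I}(H^{-1}(V))$, and the same use of condition (2) of $f\le g$ to define $\OO_Y\to H_*\OO_{X\times I}$ via $(f_\#,g_\#)$. Your explicit verification that $H_0=f$ and $H_1=g$ at the sheaf level is a detail the paper leaves implicit, but the argument is the same.
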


\begin{proof} Let $H\colon X\times I\to Y$ be the map defined by $$H(x,t)=\left\{ \aligned f(x),&\text{ for } t=0 \\ g(x),&\text{ for }t>0\endaligned \right. .$$ For any $y\in Y$, $f^{-1}(U_y)\subseteq g^{-1}(U_y)$, because $f(x)\leq g(x)$ for any $x\in X$. It follows that
$$H^{-1}(U_y)=(f^{-1}(U_y)\times I)\cup (g^{-1}(U_y)\times (0,1]).$$ Thus $H$ is continuous. Moreover, one has the exact sequence
\[ 0\to \OO_{X\times I}(H^{-1}(U_y))\to  \OO_{X\times I} (f^{-1}(U_y)\times I)\times \OO_{X\times I}(g^{-1}(U_y)\times (0,1]) \to  \OO_{X\times I}(f^{-1}(U_y)\times (0,1]),\] i.e., an exact sequence
\[ 0\to H_*\OO_{X\times I}  \to f_*\OO_X \times  g_*\OO_X \to  f_*\OO_X.\]
By Remark \ref{rem}, (a), one obtains a morphism $\OO_Y\to H_*\OO_{X\times I}$. Thus $H$ is a morphism of ringed spaces, and $H_0=f$, $H_1=g$.

\end{proof}

\begin{lem} Let $H\colon X\times I\to Y$ be a morphism of ringed spaces such that $H(x,t)=H(x,t')$ for any $t,t'>0$. Then $H_0\leq H_1$.
\end{lem}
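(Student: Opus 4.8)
The plan is to check the two conditions in the definition of $H_0\leq H_1$ in turn. Write $y_0=H(x,0)$ and, for $t>0$, $y_1=H(x,t)$ (the latter is independent of $t$ by hypothesis), so $H_0(x)=y_0$ and $H_1(x)=y_1$. For condition $(1)$ I would argue topologically: $W_0:=H^{-1}(U_{y_0})$ is open in $X\times I$ and contains $(x,0)$, hence contains a basic open set $U'\times V'$ with $x\in U'$ and $0\in V'$; for any $t>0$ lying in $V'$ one gets $y_1=H(x,t)\in U_{y_0}$, i.e. $y_0\leq y_1$, which is condition $(1)$. I would then record the consequence that $W_0$ contains the whole fibre $\{x\}\times I$: it contains $(x,0)$, and for every $t>0$ it contains $(x,t)$ since $H(x,t)=y_1\in U_{y_0}$.

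Next I would translate condition $(2)$ into a statement about a single section. Both $H_0$ and $H_1$ factor as $H\circ i_t$ ($t=0,1$), and $i_t^\#$ is by construction the canonical identification $i_t^{-1}\OO_{X\times I}=i_t^{-1}\pi_X^{-1}\OO_X=\OO_X$. Hence, passing to stalks over $x$, for $a\in\OO_{y_0}=\OO_Y(U_{y_0})$ the element $(H_0^\#)_x(a)$ is the germ at $(x,0)$ of $s:=H_\#(a)\in\OO_{X\times I}(W_0)$, viewed in $(\OO_{X\times I})_{(x,0)}=\OO_x$. Likewise, since $r_{y_0y_1}$ is the passage to the germ at $y_1$ on $\OO_Y(U_{y_0})$ and $H_\#$ commutes with restriction maps, $(H_1^\#)_x(r_{y_0y_1}(a))$ is the germ at $(x,1)$ of $s$ (more precisely of $s|_{W_1}$ with $W_1:=H^{-1}(U_{y_1})\subseteq W_0$, which has the same germ at $(x,1)$), viewed in $(\OO_{X\times I})_{(x,1)}=\OO_x$. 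So condition $(2)$ is exactly the equality, in $\OO_x$, of the germ of $s$ at $(x,0)$ and its germ at $(x,1)$.

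To finish I would exploit that $I$ is compact and connected. Since $W_0$ is an open neighbourhood of the compact fibre $\{x\}\times I$, the tube lemma yields an open $U'\ni x$ in $X$ with $U'\times I\subseteq W_0$, hence $U_x\times I\subseteq W_0$. Then $s|_{U_x\times I}$ lies in $\OO_{X\times I}(U_x\times I)=\OO_X(U_x)^{\#I}=\OO_X(U_x)=\OO_x$ (using $\#I=1$ and that $U_x$ is the minimal open set of $x$), and the germ of this section at any point $(x,t)$ of the fibre is simply that same element of $\OO_x$; in particular its germs at $(x,0)$ and at $(x,1)$ agree, hence so do those of $s$, which is condition $(2)$.

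The step I expect to be the main obstacle is the middle one: carefully unwinding $(H_t^\#)_x$ through the composition $H\circ i_t$ and the identification $\OO_{X\times I}=\pi_X^{-1}\OO_X$, so that the two composites forming the triangle in condition $(2)$ are recognized as the germs of one and the same section $s$ at the two endpoints of the fibre. Once that identification is in place, the compactness/connectedness argument closing the proof is routine.
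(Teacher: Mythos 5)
Your proof is correct, but the way you verify condition (2) differs from the paper's argument. The paper stays at the sheaf level over $Y$: having shown $H_0(x)\leq H_1(x)$ pointwise (exactly as you do), it decomposes $H^{-1}(U_y)=\bigl(H_0^{-1}(U_y)\times I\bigr)\cup\bigl(H_1^{-1}(U_y)\times(0,1]\bigr)$, computes sections of $\OO_{X\times I}=\pi_X^{-1}\OO_X$ on these pieces via $\OO_{X\times I}(U\times V)=\OO_X(U)^{\# V}$ with $V$ connected, and reads off a commutative triangle $\OO_Y\to (H_1)_*\OO_X\to (H_0)_*\OO_X$, which by Remark \ref{rem}(a) is the sheaf-theoretic reformulation of condition (2). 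You instead argue stalkwise at each $x\in X$: you unwind $(H_t^\#)_x$ through $H\circ i_t$ and the identification $(\OO_{X\times I})_{(x,t)}=\OO_x$, so that the two composites in the triangle become the germs at $(x,0)$ and $(x,1)$ of the single section $s=H_\#(a)$ on $W_0=H^{-1}(U_{y_0})$, and then use compactness of $I$ (tube lemma) plus $\OO_{X\times I}(U_x\times I)=\OO_X(U_x)$ to see these germs agree. Both arguments rest on the same structural fact -- sections of $\pi_X^{-1}\OO_X$ are constant along connected subsets of the $I$-direction -- but yours trades the paper's gluing sequence and the passage through the equivalent formulation of Remark \ref{rem}(a) for a direct germ computation; this is a perfectly valid, slightly more hands-on verification, at the cost of the careful bookkeeping of identifications that you yourself flag as the delicate step. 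A small simplification available to you: the tube lemma is not needed, since $U_x\times I\subseteq W_0$ follows directly from monotonicity of $H_0$ and $H_1$ (for $x'\geq x$ one has $H(x',0)=H_0(x')\geq y_0$ and, for $t>0$, $H(x',t)=H_1(x')\geq y_1\geq y_0$).
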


\begin{proof} Let us denote $f=H_0$, $g=H_1$.

1) $f(x)\leq g(x)$ for any $x\in X$. Let $y=f(x)$. Since $H$ is continuous, there exists $\epsilon >0$ such that $H(x,t)\in U_y$ for any $t<\epsilon$. Thus $g(x)=H_t(x)\in U_y$, i.e., $g(x)\geq f(x)$.

2) For any $y\in Y$,  $H^{-1}(U_y)$ is the union of $(f^{-1}(U_y)\times I)$ and $(g^{-1}(U_y)\times (0,1])$, whose intersection is    $f^{-1}(U_y)\times (0,1]$. From the commutative diagram
\[ \xymatrix{\OO_{X\times I}(H^{-1}(U_y))\ar[r]\ar[d] &  \OO_{X\times I}(f^{-1}(U_y)\times I)=\OO_X(f^{-1}(U_y))\ar[d]^{\Id} \\ \OO_X(g^{-1}(U_y))=\OO_{X\times I}(g^{-1}(U_y)\times (0,1])\ar[r] & \OO_{X\times I}(f^{-1}(U_y)\times (0,1])=\OO_X(f^{-1}(U_y))   }\] one obtains a commutative diagram
\[ \xymatrix{ H_*\OO_{X\times I}\ar[rr] \ar[rd]  & & f_*\OO_X \\ &  g_*\OO_X   \ar[ur]  & }\]
and composing with the morphism $\OO_Y\to H_*\OO_{X\times I}$, yields a commutative diagram
\[ \xymatrix{ \OO_Y\ar[rr]^{f_\#} \ar[rd]_{g_\#}  & & f_*\OO_X \\ &  g_*\OO_X   \ar[ur]  & }\]

With all, $f\leq g$.

\end{proof}

\begin{rem}\label{contractible} Any ringed finite space $X$ with a minimum $p$ is contractible to $p$, i.e. it is homotopy equivalent to the punctual ringed space $(p,\OO_p)$. Indeed, one has a natural morphism $i_p\colon (p,\OO_p)\to X$. On the other hand, since $p$ is the minimum, $X=U_p$ and $\OO_p=\Gamma(X,\OO_X)$, and we have the natural morphism (see Examples \ref{ejemplos}, (1)) $\pi\colon X\to (p,\OO_p)$. The composition $\pi\circ i_p$ is the identity and   $i_p\circ \pi \geq \Id_X$.
\end{rem}

\begin{prop}\label{homotinvarianceProp} Let $f,g\colon X\to Y$ be two morphisms of ringed finite spaces. If $f\sim g$, then, for any quasi-coherent sheaf $\M$ on $Y$, one has $f^*\M=g^*\M$.
\end{prop}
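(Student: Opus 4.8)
The plan is to reduce to a single ``elementary'' homotopy step $f\le g$ and then exhibit a canonical stalkwise isomorphism $f^*\M\cong g^*\M$ built from the quasi-coherence of $\M$.

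First, by the Proposition just proved, $f\sim g$ means there is a chain $f=f_0\equiv f_1\equiv\cdots\equiv f_n=g$ in $\Hom(X,Y)$. Since the relation ``$f^*\M\cong g^*\M$ canonically'' is transitive, it suffices to treat one link of the chain, i.e.\ to assume $f\le g$ (the case $f\ge g$ being the same statement with the roles of $f,g$ interchanged). So from now on assume $f\le g$. By Remark \ref{rem}, (a), there is a canonical morphism of sheaves $f^{-1}\OO_Y\to g^{-1}\OO_Y$ whose stalk at $x$ is the restriction $r_{f(x)g(x)}$, and likewise a canonical morphism $f^{-1}\M\to g^{-1}\M$ with stalk the restriction $\M_{f(x)}\to\M_{g(x)}$. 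Condition (2) in the definition of $f\le g$ says exactly that the composite $f^{-1}\OO_Y\to g^{-1}\OO_Y\overset{g^\#}\to\OO_X$ equals $f^\#$. Tensoring the map $f^{-1}\M\to g^{-1}\M$ up to $\OO_X$ — using $f^\#$ on the left and $g^\#$ on the right, which are compatible precisely by condition (2) — yields a canonical morphism of $\OO_X$-modules
\[
\varphi\colon\ f^*\M=f^{-1}\M\otimes_{f^{-1}\OO_Y}\OO_X\ \longrightarrow\ g^{-1}\M\otimes_{g^{-1}\OO_Y}\OO_X=g^*\M.
\]

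It remains to check that $\varphi$ is an isomorphism, which is a stalkwise matter. Fix $x\in X$ and write $p=f(x)$, $q=g(x)$ (so $p\le q$ in $Y$) and $B=\OO_x$, an $\OO_p$-algebra via $f^\#_x$ and an $\OO_q$-algebra via $g^\#_x$. The stalk of $\varphi$ at $x$ is the map $\M_p\otimes_{\OO_p}B\to\M_q\otimes_{\OO_q}B$. Because condition (2) makes $\OO_p\to\OO_q\to B$ coincide with $f^\#_x$, we may rewrite the source as $(\M_p\otimes_{\OO_p}\OO_q)\otimes_{\OO_q}B$ by associativity of tensor products, and then $\varphi_x$ is identified with the map $(\M_p\otimes_{\OO_p}\OO_q\to\M_q)\otimes_{\OO_q}B$. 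By Theorem \ref{qc}, quasi-coherence of $\M$ on $Y$ gives that $\M_p\otimes_{\OO_p}\OO_q\to\M_q$ is an isomorphism; tensoring it with $B$ over $\OO_q$ keeps it an isomorphism. Hence $\varphi_x$ is an isomorphism for every $x$, so $\varphi$ is an isomorphism of $\OO_X$-modules, and $f^*\M\cong g^*\M$. Composing these isomorphisms along the chain $f=f_0\equiv\cdots\equiv f_n=g$ finishes the argument.

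I do not expect a genuine obstacle here: the only nontrivial input is Theorem \ref{qc}, and everything else is the functoriality packaged in Remark \ref{rem}, (a), together with associativity of $\otimes$. The one point requiring mild care is to produce the comparison map $\varphi$ globally (rather than gluing the stalkwise isomorphisms by hand), so that the stalk computation genuinely certifies a sheaf isomorphism.
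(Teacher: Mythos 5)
Your proposal is correct and follows essentially the same route as the paper: reduce to a single step $f\le g$, then combine condition (2) of the definition of $\le$ with the quasi-coherence criterion of Theorem \ref{qc} (i.e.\ $\M_{f(x)}\otimes_{\OO_{f(x)}}\OO_{g(x)}\overset\sim\to\M_{g(x)}$) and associativity of the tensor product at each stalk. The only difference is that you first build the comparison map $\varphi\colon f^*\M\to g^*\M$ globally from the canonical morphism $f^{-1}\M\to g^{-1}\M$ of Remark \ref{rem}, (a), and then verify it stalkwise, which is a slightly more careful packaging of the paper's direct chain of stalk identifications.
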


\begin{proof} We may assume that $f\leq g$. Then, for any $x\in X$, $$(f^*\M)_x=\M_{f(x)}\otimes_{\OO_{f(x)}}\OO_x = \M_{f(x)}\otimes_{\OO_{f(x)}}\OO_{g(x)}\otimes_{\OO_{g(x)}}\OO_x = \M_{g(x)}\otimes_{\OO_{g(x)}}\OO_x =(g^*\M)_x$$ where the second equality is due to the hypothesis $f\leq g$ and the third one to the quasi-coherence of $\M$.
\end{proof}

\begin{rems} \begin{enumerate} \item Proposition \ref{homotinvarianceProp} is not true if $\M$ is not quasi-coherent. For example, let $X$ be a finite topological space with a minimum $p$. Then $X$ is contractible to $p$, i.e., the identity $\Id\colon X\to X$ is homotopic to the constant map $g\colon X\to X$, $g(x)=p$. If $\M$ is a non constant sheaf on $X$, then $\Id^*\M$ is not equal to $g^*\M$ (they  are not even isomorphic), since $g^*\M$ is a constant sheaf.
\item We do not know if Proposition \ref{homotinvarianceProp} holds for general ringed spaces. 
\end{enumerate}
\end{rems}

The following theorem is now straightforward (and it generalizes Corollary \ref{corqc}):

\begin{thm}\label{homotinvariance} If $X$ and $Y$ are homotopy equivalent ringed finite spaces, then their categories of quasi-coherent modules are equivalent. In other words, the category of quasi-coherent modules on a ringed finite space is a homotopy invariant.
\end{thm}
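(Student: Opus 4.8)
The plan is to upgrade Proposition \ref{homotinvarianceProp} from an equality of pullbacks to an equivalence of categories, using the homotopy equivalence data directly. Suppose $X$ and $Y$ are homotopy equivalent, with morphisms $f\colon X\to Y$ and $g\colon Y\to X$ such that $g\circ f\sim \Id_X$ and $f\circ g\sim \Id_Y$. Since (by an earlier remark) $f^*$ and $g^*$ send quasi-coherent modules to quasi-coherent modules, they restrict to functors between the categories $\mathrm{QCoh}(X)$ and $\mathrm{QCoh}(Y)$. I claim these are mutually inverse equivalences. Indeed, for any quasi-coherent $\M$ on $X$, Proposition \ref{homotinvarianceProp} applied to the homotopic pair $g\circ f\sim \Id_X$ gives
\[ g^*f^*\M=(g\circ f)^*\M=\Id_X^*\M=\M,\]
and symmetrically $f^*g^*\Nc=\Nc$ for quasi-coherent $\Nc$ on $Y$. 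Hence $f^*$ and $g^*$ are quasi-inverse, so $\mathrm{QCoh}(X)\simeq\mathrm{QCoh}(Y)$.

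The one point that deserves care — and which I expect to be the main (if modest) obstacle — is that Proposition \ref{homotinvarianceProp} as stated only asserts the \emph{equality} $f^*\M=g^*\M$ of objects, whereas to conclude an equivalence of categories one wants this to be natural in $\M$ and compatible with the functoriality $(g\circ f)^*\cong g^*f^*$ of pullback. So I would first note that the isomorphism $f^*\M\xrightarrow{\sim} g^*\M$ built in the proof of Proposition \ref{homotinvarianceProp} (for $f\le g$, it is stalkwise the map induced by $\M_{f(x)}\otimes_{\OO_{f(x)}}\OO_x\to\M_{g(x)}\otimes_{\OO_{g(x)}}\OO_x$, which is an isomorphism precisely by quasi-coherence) is a morphism of $\OO_X$-modules and is natural in $\M$; chaining such isomorphisms along a zig-zag $f=f_0\equiv f_1\equiv\cdots\equiv f_n=g$ gives a natural isomorphism of functors $f^*\cong g^*$ on $\mathrm{QCoh}(Y)$. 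Combined with the standard natural isomorphism $(g\circ f)^*\cong g^*\circ f^*$ for composition of morphisms of ringed spaces, this yields natural isomorphisms $g^*\circ f^*\cong \Id_{\mathrm{QCoh}(X)}$ and $f^*\circ g^*\cong \Id_{\mathrm{QCoh}(Y)}$.

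Thus the argument is essentially a formal consequence of Proposition \ref{homotinvarianceProp} together with the elementary functoriality of $(-)^*$; no new geometric input is needed. I would write the proof of Theorem \ref{homotinvariance} in two or three lines along the lines of the display above, perhaps remarking parenthetically that Corollary \ref{corqc} is recovered by taking $Y=(p,\OO_p)$ a point, since a ringed finite space with a minimum is contractible to it (Remark \ref{contractible}).
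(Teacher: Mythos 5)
Your proposal is correct and is essentially the paper's argument: the theorem is stated there as an immediate ("straightforward") consequence of Proposition \ref{homotinvarianceProp}, exactly as you deduce it, and your extra remark about naturality of the stalkwise isomorphism along a zig-zag $f_0\equiv\cdots\equiv f_n$ only makes explicit what the paper leaves implicit. The sole blemish is a harmless transposition of composition order: for $\M$ on $X$ one has $f^*g^*\M\cong(g\circ f)^*\M$ (not $g^*f^*\M$), and correspondingly $g^*\circ f^*\cong\Id_{\mathrm{QCoh}(Y)}$, $f^*\circ g^*\cong\Id_{\mathrm{QCoh}(X)}$, which does not affect the argument by symmetry.
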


\begin{rem} We do not know if this theorem holds for general (non finite) ringed spaces.
\end{rem}

\subsection{Homotopy classification: minimal spaces}

Here we see that Stong's homotopical classification of finite topological spaces via minimal topological spaces (\cite{Stong}) can be reproduced in the ringed context.

First of all, let us prove that any ringed finite space is homotopy equivalent to its $T_0$-associated space. Let $X$ be a ringed finite space, $X_0$ its associated $T_0$-space and $\pi\colon X\to X_0$ the quotient map. Let us denote $\OO_0=\pi_*\OO$. Then $(X,\OO)\to (X_0,\OO_0)$ is a morphism of ringed spaces.  The preimage $\pi^{-1}$ gives a bijection between the open subsets of $X_0$ and the open subsets of $X$. Hence, for any $x\in X$, $\OO_x={\OO_0}_{\pi(x)}$, and any section $s\colon X_0\to X$ of $\pi$ is continuous and a morphism of ringed spaces. The composition $\pi\circ s$ is the identity and the composition $s\circ \pi$ is homotopic to the identity, because $\OO_x=\OO_{s(\pi(x))}$. We have then proved:

\begin{prop}\label{sdr} $(X_0,\OO_0) \hookrightarrow (X,\OO_X)$ is a strong deformation retract.
\end{prop}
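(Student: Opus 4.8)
The plan is to exhibit an explicit section $s$ of $\pi$, equip it with a structure morphism of sheaves, and then verify the three things a strong deformation retract requires: that $s$ is a morphism of ringed spaces, that $\pi\circ s=\Id_{X_0}$ (so that $s$ identifies $X_0$ with a subspace of $X$ onto which $\pi$ retracts), and that $s\circ\pi\sim\Id_X$ by a homotopy that is stationary on $s(X_0)$.

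First I would compare open sets and stalks. If $x\sim x'$ then $x\le x'$ and $x'\le x$, so $U_x=U_{x'}$; moreover every open subset of $X$ is saturated for $\sim$, since an open set is an up-set and $x\sim x'$ forces $x\le x'$. Hence $V\mapsto\pi^{-1}(V)$ is a bijection from the open sets of $X_0$ onto those of $X$, with $\pi^{-1}(U_{\pi(x)})=U_x$ for every $x$. Taking sections over these opens gives, for each $x\in X$,
\[ (\OO_0)_{\pi(x)}=\Gamma(U_{\pi(x)},\pi_*\OO)=\Gamma(\pi^{-1}(U_{\pi(x)}),\OO)=\Gamma(U_x,\OO)=\OO_x,\]
and under this identification the restriction morphisms of $\OO_0$ are literally those of $\OO$; in particular the canonical map $\pi^{-1}\OO_0\to\OO$ is stalkwise the identity.

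Next, choose for each class $\bar x\in X_0$ a representative $s(\bar x)\in X$. The preorder on $X_0$ is $\bar x\le\bar y\iff x\le y$ in $X$ (well defined, since $\sim$-equivalent points are $\le$-comparable in both directions), so $s$ is monotone, hence continuous. For its structure morphism take $s^\#_{\bar x}\colon\OO_{s(\bar x)}\to(\OO_0)_{\bar x}$ to be the identity --- legitimate because $(\OO_0)_{\bar x}=\OO_{s(\bar x)}$ by the previous paragraph --- and note that compatibility with the restriction triangles for $\bar x\le\bar y$ is then automatic. Thus $s$ is a morphism of ringed finite spaces, and by construction $\pi\circ s=\Id_{X_0}$, both as a continuous map and on structure sheaves.

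Finally I would compare $s\circ\pi$ with $\Id_X$. For $x\in X$ one has $(s\circ\pi)(x)\sim x$, hence $(s\circ\pi)(x)\le x$; and the structure map $(s\circ\pi)^\#_x\colon\OO_{(s\circ\pi)(x)}\to\OO_x$ is the composite of the two identities above, i.e. it equals the restriction morphism $r_{(s\circ\pi)(x),\,x}$. This is precisely condition (2) in the definition of $\leq$ for morphisms, so $s\circ\pi\le\Id_X$, whence $s\circ\pi\sim\Id_X$ by the lemma that $f\le g$ implies $f\sim g$. Moreover, on the subspace $s(X_0)$ the maps $s\circ\pi$ and $\Id_X$ agree (because $\pi\circ s=\Id_{X_0}$), so the explicit homotopy furnished by that lemma --- constant in the $I$-variable except at one endpoint --- restricts to the constant homotopy on $s(X_0)$, which exhibits $(X_0,\OO_0)\cong s(X_0)$ as a strong deformation retract of $(X,\OO_X)$. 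The one step needing care is the stalk identification $(\OO_0)_{\pi(x)}=\OO_x$ together with its compatibility with restrictions; once that is in place the rest is bookkeeping with identity morphisms, and I foresee no real obstacle.
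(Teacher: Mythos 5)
Your proposal is correct and follows essentially the same route as the paper: identify the open sets and stalks of $(X_0,\OO_0)$ with those of $(X,\OO)$ via $\pi^{-1}$, observe that any set-theoretic section $s$ of $\pi$ is then automatically a morphism of ringed spaces with $\pi\circ s=\Id_{X_0}$, and conclude $s\circ\pi\sim\Id_X$ from the order/stalk comparison. Your extra care in checking that the homotopy is relative to $s(X_0)$ only makes explicit what the paper leaves to the reader.
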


Let $X$ be a ringed finite $T_0$-space. Let us generalize the notions of up beat point and down beat point to the ringed case.

\begin{defn} A point $p\in X$ is called a {\it down beat point} if  $\bar p -\{ p\}$ has a maximum. A point $p$ is called an {\it up beat point} if $U_p- \{ p\}$ has a minimum $q$ and $r_{pq}\colon \OO_p\to\OO_q$ is an isomorphism. In any of these cases we say that $p$ is a {\it beat point} of $X$.
\end{defn}

\begin{prop}\label{beating} Let $X$ be a ringed finite $T_0$-space and $p\in X$ a beat point. Then $X- \{ p\}$ is a strong deformation retract of $X$.
\end{prop}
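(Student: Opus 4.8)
The plan is to imitate the classical argument for finite topological spaces (Barmak, Stong), adapting the homotopy to keep track of the sheaf of rings. Fix a beat point $p\in X$ and let $j\colon X-\{p\}\hookrightarrow X$ be the inclusion; I want to produce a morphism of ringed finite spaces $r\colon X\to X-\{p\}$ with $r\circ j=\Id$ and $j\circ r\equiv\Id_X$ (which, by the lemmas of the previous subsection, gives $j\circ r\sim\Id_X$), and moreover the homotopy will be relative to $X-\{p\}$, so that $X-\{p\}$ is a strong deformation retract.

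First I would treat the down beat point case. If $\bar p-\{p\}$ has a maximum $q$, define $r$ on points by $r(x)=x$ for $x\neq p$ and $r(p)=q$; this is monotone because the only new relations to check involve $p$, and $p\leq x$ forces $q\leq x$ by maximality of $q$ in $\bar p-\{p\}$ (equivalently $p\in\bar x$, $p\neq x$, so $p\le q\le x$... one checks $x\ge p$ implies $x\ge q$), while $x\le p$ with $x\ne p$ gives $x\in \bar p-\{p\}$ hence $x\le q$. For the ring data, note $U_{r(x)}\subseteq U_x$ is forced only at $x=p$, where $U_q\subseteq U_p$ (since $q\le p$), so I use the restriction $r_{pq}\colon\OO_p\to\OO_q$ as $r^\#_p$ and the identity elsewhere; compatibility with the restriction morphisms is then exactly the cocycle condition $r_{ql}\circ r_{pq}=r_{pl}$. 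One checks $r\circ j=\Id_{X-\{p\}}$, $j\circ r\leq \Id_X$ (the only nontrivial comparison is at $p$: $q\le p$ and the triangle $\OO_q\to\OO_p\to\OO_p$ commutes trivially since $(j\circ r)^\#_p=r_{pq}$ and $\Id^\#_p=\Id$), and the homotopy $H$ of the Lemma "$f\le g\Rightarrow f\sim g$" is already constant on $X-\{p\}$, hence relative to $X-\{p\}$.

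For the up beat point case, $U_p-\{p\}$ has a minimum $q$ with $r_{pq}\colon\OO_p\to\OO_q$ an isomorphism. Here I define $r(x)=x$ for $x\ne p$ and $r(p)=q$ again; monotonicity: $x\le p$, $x\ne p$ gives $x<p$, and I must send it above $q$ — indeed $x\le p$ means $p\in U_x$, but I need $q\in U_x$, which follows since... actually the natural retraction in the up beat case sends $p$ to $q=\min(U_p-\{p\})$ and one checks monotonicity using that $x<p\Rightarrow x<q$ is false in general; rather one uses $x>p\Rightarrow x\ge q$ (true: $x\in U_p-\{p\}$) — so in fact the up beat retraction is $r(p)=q$ with the comparison $j\circ r\geq\Id_X$, the dual of the down beat case. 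The ring homomorphism $r^\#_p\colon\OO_{r(p)}=\OO_q\to\OO_p$ is taken to be $(r_{pq})^{-1}$, which exists precisely by the up beat hypothesis; compatibility at $p\le x$ (for $x\in U_p$, so $q\le x$) reduces to $r_{qx}\circ (r_{pq})^{-1}=r_{px}$, i.e. $r_{qx}=r_{px}\circ r_{pq}$... wait, this is $r_{px}\circ r_{pq}$ which isn't a composition in the right order, so one instead checks $r_{qx}\circ r_{pq}=r_{px}$, i.e. the cocycle condition, then composes with $(r_{pq})^{-1}$. Then $r\circ j=\Id$, $j\circ r\equiv\Id_X$, and $H$ is again relative to $X-\{p\}$.

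The main obstacle is the bookkeeping of the ring-theoretic compatibility conditions: unlike the purely topological case, I must verify at each step that the commuting-triangle condition (2) in the definition of $f\le g$ holds, and that the newly defined $r^\#_p$ is compatible with all restriction maps $r_{px}$ for $x\ge p$; in the up beat case this genuinely requires the invertibility of $r_{pq}$, which is why it appears in the hypothesis (in the down beat case no invertibility is needed, paralleling the asymmetry already visible in the definition of beat point). Everything else — continuity of $r$, the identities $r\circ j=\Id$ and $j\circ r\equiv\Id_X$, and upgrading $\equiv$ to a strong deformation retract — then follows formally from Remark~\ref{rem}, the two Lemmas on homotopies, and the observation that the explicit homotopy $H(x,t)$ built there is stationary on $X-\{p\}$.
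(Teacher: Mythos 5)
Your construction is exactly the paper's: collapse $p$ to $q$ (the maximum of $\bar p-\{p\}$ in the down beat case, the minimum of $U_p-\{p\}$ in the up beat case), take as ring datum at $p$ the restriction $\OO_q\to\OO_p$ (resp.\ $(r_{pq})^{-1}$, which is where the up-beat hypothesis is used), check compatibility via the cocycle condition, and conclude $j\circ r\le\Id_X$ (resp.\ $j\circ r\ge\Id_X$) with the homotopy of the earlier lemma stationary on $X-\{p\}$, so the retract is strong. The argument is correct and essentially identical to the paper's proof, modulo harmless slips in your bookkeeping: in the down beat case the map you need is the restriction $\OO_q\to\OO_p$ attached to $q\le p$ (not a map $\OO_p\to\OO_q$, and $q\le p$ gives $U_p\subseteq U_q$), and in the up beat case the implication $x<p\Rightarrow x\le q$ does hold (since $p\le q$), which is precisely what finishes the monotonicity check you left dangling.
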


\begin{proof} Assume that $p$ is a down beat point and let $q$ be the maximum of $\bar p- \{ p\}$. Define the retraction $r\colon X\to X-\{ p\}$ by $r(p)=q$. It is clearly continuous (order preserving). It is a ringed morphism because one has the restriction morphism $\OO_q\to\OO_p$. If $i\colon X-\{ p\}\hookrightarrow X$ is the inclusion, then $i\circ r\leq \Id_X$ and we are done.

Assume now that $p$ is an up beat point and let $q$ be the minimum of $U_p- \{ p\}$. Define the retraction $r\colon X\to X-\{ p\}$ by $r(p)=q$. It is order preserving, hence continuous. By hypothesis the restriction morphism $\OO_p\to\OO_q$ is an isomorphism, so that $r$ is a morphism of ringed spaces. Finally, $i\circ r\geq \Id_X$ and we are done.
\end{proof}

\begin{defn}  A ringed finite $T_0$-space is a {\it minimal} ringed finite space if it has no beat points. A {\it core} of a ringed finite space $X$ is a strong deformation retract which is a minimal ringed finite space.
\end{defn}

By Propositions \ref{sdr} and \ref{beating} we deduce that every ringed finite space
has a core. Given a ringed finite space $X$, one can find a $T_0$-strong deformation
retract $X_0\subseteq X$ and then remove beat points one by one to obtain a minimal
ringed finite space. As in the topological case, the notable property about this construction is that in fact the
core of a ringed finite space is unique up to isomorphism, moreover: two ringed finite spaces are homotopy equivalent if and only if their cores are isomorphic.

\begin{thm}\label{minimal} Let $ X$ be a minimal ringed finite space. A map $f \colon X \to X$ is
homotopic to the identity if and only if $f = \Id_X$.
\end{thm}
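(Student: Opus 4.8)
The plan is to follow Stong's argument, letting the ring structure intervene at a single point. By the fence characterization of homotopy obtained above, $f\sim\Id_X$ provides a chain $f=f_0\equiv f_1\equiv\cdots\equiv f_n=\Id_X$ with all $f_i\in\Hom(X,X)$. First I would isolate the essential assertion: \textbf{Claim:} \emph{if $g\colon X\to X$ is a morphism of ringed spaces with $g\leq\Id_X$ or $\Id_X\leq g$, then $g=\Id_X$.} Granting the Claim, the theorem follows by induction on the length $n$: for $n=0$ there is nothing to prove, while for $n\geq 1$ one has $f_{n-1}\equiv f_n=\Id_X$, so the Claim forces $f_{n-1}=\Id_X$ and the chain shortens to length $n-1$.

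To establish the Claim I would argue by contradiction, producing a beat point of $X$ (recall that, being minimal, $X$ is $T_0$, hence a finite poset). Suppose first $g\leq\Id_X$ and $g\neq\Id_X$, and pick $x$ minimal among the points moved by $g$. Then $g(x)<x$, and $g(z)=z$ for every $z<x$, so by monotonicity $z=g(z)\leq g(x)$; thus $g(x)$ is the maximum of $\bar x\setminus\{x\}$ and $x$ is a down beat point, which is impossible. Note this direction uses only the underlying order.

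Now suppose $\Id_X\leq g$ and $g\neq\Id_X$. Dually I would pick $x$ maximal among the points moved by $g$ and set $q:=g(x)$, so $q>x$ and $g(z)=z$ for every $z>x$ (in particular $g(q)=q$); the same monotonicity argument shows $q=\min(U_x\setminus\{x\})$. Hence, to conclude that $x$ is an up beat point it remains only to check that the restriction $r_{xq}\colon\OO_x\to\OO_q$ is an isomorphism.

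This last point is where the ring data is used, and it is the main obstacle, since $g\leq\Id_X$ and $\Id_X\leq g$ are no longer symmetric in the ringed setting: an up beat point carries a ring-isomorphism hypothesis that a down beat point does not. The way to get it is to play the homotopy inequality against bare functoriality. Condition (2) of $\Id_X\leq g$ at the point $x$ reads $g^\#_x\circ r_{xq}=\Id_{\OO_x}$, and at the point $q$ (using $g(q)=q$) it reads $g^\#_q=\Id_{\OO_q}$; meanwhile the commutative square expressing that $g$ is a morphism of ringed spaces, applied to $x\leq q$, has edges $r_{g(x)g(q)}=r_{qq}=\Id_{\OO_q}$, $g^\#_x$, $r_{xq}$ and $g^\#_q=\Id_{\OO_q}$, hence says $r_{xq}\circ g^\#_x=\Id_{\OO_q}$. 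Therefore $r_{xq}$ and $g^\#_x$ are mutually inverse, $x$ is an up beat point, and the minimality of $X$ is contradicted; so $g=\Id_X$, completing the proof.
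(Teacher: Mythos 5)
Your proposal is correct and takes essentially the same route as the paper: reduce via the fence characterization to a single comparison $g\equiv\Id_X$, get a down beat point from a minimal moved point when $g\leq\Id_X$, and when $\Id_X\leq g$ combine the homotopy triangle with the commutative square expressing that $g$ is a morphism of ringed spaces to conclude that $r_{xq}$ and $g^\#_x$ are mutually inverse, contradicting minimality. The only step left tacit is that ``pick a point moved by $g$'' presupposes that a morphism comparable to $\Id_X$ and agreeing with it pointwise must equal $\Id_X$ (the paper's Remark \ref{rem}, (b)); this follows in one line from the same triangle identity you already use, since at a fixed point $x$ it gives $g^\#_x=\Id_{\OO_x}$.
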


\begin{proof}  We may suppose that $ f \leq \Id_X$ or $f \geq \Id_X$. Assume
$ f \leq \Id_X$. By Remark \ref{rem}, (b), it suffices to prove that $f(x)=x$ for any $x\in X$. On the contrary,  let $p\in X$ be minimal with the condition $f(x)\neq x$. Hence $f(p)<p$ and $f(x)=x$ for any $x<p$. Then $f(p)$ is the maximum of $\bar p-\{ p\}$, which contradicts that $X$ has no down beat points.

Assume now that $f\geq \Id_X$. Again, it suffices to prove that $f(x)=x$ for any $x\in X$. On the contrary, let  $p\in X$ be maximal with the condition $f(x)\neq x$. Then $f(p)>p$ and $f(x)=x$ for any $x>p$. Hence $q=f(p)$ is the minimum of $U_p-\{ p\}$. Moreover  $f$ is a morphism of ringed spaces, hence it gives a commutative diagram
\[ \xymatrix {\OO_q=\OO_{f(p)} \ar[r]^{\quad f^\#_p}\ar[d]_{\Id} &\OO_p \ar[d]^{r_{pq}}\\ \OO_q=\OO_{f(q)}\ar[r]^{\quad f^\#_q} & \OO_q.}\]  Moreover, since $f\geq \Id_X$, the triangles
\[ \xymatrix{\OO_{p}\ar[rr]^{r_{pq}}\ar[rd]_{\Id^\#_p} & & \OO_{q}\ar[ld]^{f^\#_p}\\ & \OO_p & }\quad
\xymatrix{\OO_{q}\ar[rr]^{r_{qq}}\ar[rd]_{\Id^\#_q} & & \OO_{q}\ar[ld]^{f^\#_q}\\ & \OO_q & }\] are commutative. One concludes that   $r_{pq}$ is an isomorphism and $p$ is an up beat point of $X$.
\end{proof}

\begin{thm}\label{homotopic-classification} (Classification Theorem). A homotopy equivalence between
minimal ringed finite spaces is an isomorphism. In particular the core of a ringed
finite space is unique up to isomorphism and two ringed finite spaces are homotopy
equivalent if and only if they have isomorphic cores.
\end{thm}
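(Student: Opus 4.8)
The plan is to reduce everything to Theorem \ref{minimal} (a self-map of a minimal ringed finite space homotopic to the identity is the identity) together with the existence of cores supplied by Propositions \ref{sdr} and \ref{beating}, and the formal fact that $\sim$ is an equivalence relation on morphisms which is compatible with composition. So I would start with the main assertion. Let $X$ and $Y$ be minimal ringed finite spaces and suppose $f\colon X\to Y$, $g\colon Y\to X$ satisfy $g\circ f\sim\Id_X$ and $f\circ g\sim\Id_Y$. Then $g\circ f$ is a self-map of the minimal space $X$ homotopic to $\Id_X$, so by Theorem \ref{minimal} $g\circ f=\Id_X$; likewise $f\circ g=\Id_Y$. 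In particular $f$ is a homeomorphism of underlying topological spaces, with inverse $g$.

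It remains to check that $f$ is an isomorphism of ringed spaces, i.e. that each ring homomorphism $f^\#_x\colon\OO_{Y,f(x)}\to\OO_{X,x}$ is bijective. From $g\circ f=\Id_X$ one gets, at each $x\in X$, that $f^\#_x\circ g^\#_{f(x)}=\Id_{\OO_{X,x}}$ (the structure-ring component of the identity morphism at $x$). From $f\circ g=\Id_Y$, evaluated at the point $y=f(x)$ — for which $g(y)=g(f(x))=x$ — one gets $g^\#_{f(x)}\circ f^\#_x=\Id_{\OO_{Y,f(x)}}$. Hence $f^\#_x$ is invertible with inverse $g^\#_{f(x)}$, and $f$ is an isomorphism of ringed finite spaces. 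This proves the first sentence of the theorem.

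For the statements about cores: given a ringed finite space $X$, Proposition \ref{sdr} gives a $T_0$-strong deformation retract, and deleting beat points one at a time via Proposition \ref{beating} terminates (the space is finite) in a minimal ringed finite space $X_c\hookrightarrow X$ which is a strong deformation retract, hence homotopy equivalent to $X$; this is a core. If $X_c$ and $X_c'$ are two cores of $X$, both are homotopy equivalent to $X$, hence to each other (here one uses that the relation $\sim$ on morphisms is an equivalence relation compatible with pre- and post-composition, which follows from the fence characterization of the proposition preceding Remark \ref{contractible} together with the implications $f\leq g\Rightarrow h\circ f\leq h\circ g$ and $f\leq g\Rightarrow f\circ k\leq g\circ k$). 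Since $X_c$ and $X_c'$ are minimal, the first part forces $X_c\cong X_c'$, so the core is unique up to isomorphism. Finally, two ringed finite spaces $X$ and $Y$ are homotopy equivalent if and only if $X_c\cong Y_c$: if $X_c\cong Y_c$ then $X\sim X_c\cong Y_c\sim Y$; conversely if $X\sim Y$ then $X_c\sim X\sim Y\sim Y_c$, and minimality gives $X_c\cong Y_c$ by the first part.

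I expect the only real content to be the already-proved Theorem \ref{minimal}; the remaining obstacle is bookkeeping. Specifically, one must know that $\sim$ is an equivalence relation on $\Hom(X,Y)$ and is stable under composition, so that homotopy equivalence of spaces is transitive and the composite morphisms above can be manipulated freely. The stability $f\leq g\Rightarrow h\circ f\leq h\circ g$ is the one point needing a short diagram chase: with $h$ a morphism of ringed finite spaces and $f(x)\leq g(x)$, the compatibility square of $h$ for the relation $f(x)\leq g(x)$ combined with condition (2) for $f\leq g$ yields condition (2) for $h\circ f\leq h\circ g$; the other stability $f\circ k\leq g\circ k$ is immediate. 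Once this is recorded, the proof is a purely formal consequence of Theorems \ref{minimal}, \ref{sdr} and \ref{beating}.
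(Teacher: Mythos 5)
Your proof is correct and takes essentially the same route as the paper: apply Theorem \ref{minimal} to the two composites to get $g\circ f=\Id_X$ and $f\circ g=\Id_Y$, conclude that $f$ is an isomorphism, and then obtain existence and uniqueness of cores and the classification from Propositions \ref{sdr} and \ref{beating}. The extra bookkeeping you include (the stalkwise check that each $f^\#_x$ is invertible, and the compatibility of the relation $\leq$ with pre- and post-composition so that $\sim$ behaves well under composites) is correct and simply makes explicit steps the paper leaves implicit.
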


\begin{proof}  Let $f \colon X \to Y$ be a homotopy equivalence between minimal ringed finite spaces and let $g \colon Y \to X$ be a homotopy inverse. Then $gf = \Id_X$ and $fg = \Id_Y$ by
Theorem \ref{minimal}. Thus, f is an isomorphism. If $X_1$ and $X_2$ are two cores of
a ringed finite space $X$, then they are homotopy equivalent minimal ringed finite spaces, and therefore, isomorphic. Two ringed finite spaces $X$ and $Y$ have the same
homotopy type if and only if their cores are homotopy equivalent, but this is
the case only if they are isomorphic.
\end{proof}

\end{document}